%

\input ./style/arxiv-general.cfg
\documentclass[aap,MSNbibl,seceqn,dvips]{arximspdf}
\makeatletter
   \@ifpackageloaded{graphicx}{}{\usepackage{graphicx}}
\makeatother

%

\doi{10.1214/15-AAP1123}
\volume{26}
\issue{3}
\pubyear{2016}
\firstpage{1467}
\lastpage{1494}
\docsubty{FLA}

\makeatletter
\newcommand{\rrvert}{\vert}
\newcommand{\rrVert}{\Vert}
\newcommand{\llvert}{\vert}
\newcommand{\llVert}{\Vert}
\newcommand{\eqref}[1]{(\ref{#1})}
\newtheorem{theorem}{Theorem}[section]

\newtheorem{lemma}[theorem]{Lemma}
\newtheorem{corollary}[theorem]{Corollary}

\newproclaim{remark}[theorem]{Remark}
\newproclaim{example}[theorem]{Example}
\newproclaim{definition}[theorem]{Definition}
\newproclaim{conjecture}[theorem]{Conjecture}
\newproclaim{question}[theorem]{Question}
\newproclaim{assumption}[theorem]{Assumption}


\newcommand{\Z}{\mathbb{Z}}
\newcommand{\N}{\mathbb{N}}
\newcommand{\R}{\mathbb{R}}
\newcommand{\E}{\mathbb{E}}
\renewcommand{\P}{\mathbb{P}}
\newcommand{\F}{\mathcal{F}}

\makeatother

\begin{document}
\begin{frontmatter}

\title{Local asymptotics for controlled martingales}
\runtitle{Local asymptotics for controlled martingales}

\begin{aug}
\author[A]{\fnms{Scott N.}~\snm{Armstrong}\thanksref{m1}\ead[label=e1]{armstrong@ceremade.dauphine.fr}}
\and
\author[B]{\fnms{Ofer}~\snm{Zeitouni}\corref{}\thanksref{T1,m2,m3}\ead[label=e2]{ofer.zeitouni@weizmann.ac.il}}
\runauthor{S.~N. Armstrong and O. Zeitouni}
\affiliation{Universit\'e Paris-Dauphine\thanksmark{m1},
Weizmann Institute of Science\thanksmark{m2}\\ and New York University\thanksmark{m3}}
\thankstext{T1}{Supported in part by a grant of the Israel Science Foundation and by the Henri Taubman Professorial chair at the Weizmann Institute.}
\address[A]{Ceremade (UMR CNRS 7534)\\
Universit\'e Paris-Dauphine\\
Place du Mar\'{e}chal de Lattre de Tassigny\\
75775 Paris Cedex 16\\
France\\
\printead{e1}}
\address[B]{Faculty of Mathematics\\
Weizmann Institute of Science\\
Rehovot 76100\\
Israel\\
and\\
Courant Institute of\\
\quad Mathematical Sciences\\
New York University\\
251 Mercer Street\\
New York, New York 10012\\
USA\\
\printead{e2}}
\end{aug}

%
\received{\smonth{2} \syear{2015}}
%
\revised{\smonth{5} \syear{2015}}

%
\begin{abstract}
We consider controlled martingales with bounded steps
where the controller is allowed at each step to choose the distribution of
the next step, and where the goal is to hit a fixed ball at the origin
at time~$n$. We show that the algebraic rate of decay (as $n$ increases
to infinity)
of the value function in the discrete setup coincides with its
continuous counterpart, provided a reachability assumption is satisfied.
We also study in some detail the uniformly elliptic case and obtain explicit
bounds
on the rate of decay. This generalizes and improves upon several
recent studies of the one dimensional case, and is a discrete analogue
of a stochastic control problem recently investigated in Armstrong and
Trokhimtchouck [\textit{Calc. Var. Partial Differential Equations} \textbf{38}
(2010) 521--540].
\end{abstract}

%
\begin{keyword}[class=AMS]
\kwd{60G42}
\kwd{93E20}
\end{keyword}
\begin{keyword}
\kwd{Stochastic control}
\kwd{martingale}
\kwd{nonlinear parabolic equation}
\end{keyword}
\end{frontmatter}

\section{Introduction}\label{sec1}
Consider a family of
(possibly multidimensional) martingales $\{M_n\}_{n\geq0}$
in discrete time, with $M_0=0$, equipped with their natural filtration
$\F_n$. What
is the maximal probability that, at time $n$, the martingale is in a prescribed
set $A$? Similarly, what is the minimal probability?
We will be focused on the asymptotic analysis of these quantities as
$n\to\infty$.

This problem can be cast as a stochastic control problem \cite{BS,DY},
by noting
that $M_{n+1}=M_n+\Delta_{n+1}$ where the \textit{law}
of $\Delta_{n+1}$ is adapted to the filtration
$\F_n$, and is considered as a control;
the martingale condition then restricts the control to satisfy that
$\E[\Delta_{n+1}\vert\F_n]=0$.
Already in dimension $1$, the \textit{quantitative} aspects of this question,
which have
recently received attention from several authors (see \cite{A,AT,GPZ}),
lead to some nontrivial (and, to us, counterintuitive) observations,
which we now explain, in a somewhat more
restrictive setup than developed in the rest of the paper.

Fix $\lambda\in(0,1)$ and let
$\mathfrak{M}_{1,\lambda}$ denote\setcounter{footnote}{1}\footnote{More formally,
let $\Sigma=[-1,1]$ and let $\mathcal{P}$ denote the
collection of probability measures on
$\Sigma^\N$ equipped with its Borel $\sigma$-algebra.
For $\P\in\mathcal{P}$,
and finite, ordered $I\subset\N$, let $\P_I$ denote the marginal of
$\P$ on $\Sigma^I$, with $\P_n:=\P_{\{1,\ldots,n\}}$;
write $\P_n=\P_{n-1}\ltimes\P_n^{(n-1)}$ for the disintegration
of $\P_n$.
Then, $\mathfrak{M}_{1,\lambda}$ is the collection of
$\P\in\mathcal{P}$
satisfying the conditions
$\int x \,d\P_n^{(n-1)}(x)=0$,
$\int x^2 \,d\P_n^{(n-1)}(x)\in[\lambda,1]$,
$\P_{n-1}$ almost surely. The sequence $\{\Delta_i\}_{i\geq1}$
is then the
canonical process associated with $\P$.}
the
collection of laws of discrete time
martingales as above
satisfying
%
\begin{equation}
\label{eq-rev21} |\Delta_i|\leq1\qquad \mbox{a.s.}
\end{equation}
and
%
\begin{equation}
\label{eq-rev22} \lambda\leq\E \bigl[ \Delta_i^2 \vert
\F_{i-1} \bigr] \leq1 \qquad\mbox{a.s.}
\end{equation}
In this setup, we will be interested in the $n\to\infty$
asymptotics of
%
\begin{equation}
\label{eq-new2} q_n:=\sup_{\P\in\mathfrak{M}_{1,\lambda}} \P\bigl(|M_n|
\leq1\bigr).
\end{equation}

We now come to the counterintuitive
observations concerning $q_n$ alluded to above.
Introduce the quadratic variation process
$V_n=\sum_{i=1}^{n} \E[ \Delta_i^2
\vert\F_{i-1}]$, and note that
$\lambda n\leq V_n\leq n$. By the martingale central limit theorem
(see, e.g., \cite{HH}, Theorem~3.4), $M_n/\sqrt{V_n}$ converges in
distribution
as $n\to\infty$ to
a standard Gaussian random variable. In particular,
$M_n$ is spread out at scale $\sqrt{n}$, uniformly in
$\P\in\mathfrak{M}_{1,\lambda}$ and, therefore, one may naively
expect that a form of a local CLT could also hold, that is, that there
exists a constant $C$ such that
for any $\P\in\mathfrak{M}_{1,\lambda}$ and all $n$,
\[
\P\bigl[|M_n|\leq1\bigr]\leq\frac{C}{n^{1/2}}.
\]
This belief turns out to be false, as was shown in \cite{GPZ}: if
$\lambda<1$ then there exist constants $\alpha=\alpha(\lambda)<1/2$ and
$C=C(\lambda)>0$
so that, for every $n$ one may construct\footnote{In fact, the martingale constructed in
\cite{GPZ} is a time-inhomogeneous Markov chain with state space $\Z$,
and the
estimate in \eqref{eq-rev23} is then uniform in the starting
state as long as $|M_0|\leq\sqrt{n}$. Taking the sequence of times
$n_k=n_{k-1}+n_{k-1}^2$ with $n_0=1$, and using during
the time interval $[n_{k-1},n_k]$ the transition probabilities
from \cite{GPZ}
corresponding to $n=n_k-n_{k-1}$, one obtains a martingale $\{M_n\}$
satisfying
\eqref{eq-rev21} and
\eqref{eq-rev22},
and such that \eqref{eq-rev23} holds for all $n=n_k$ and
all $k\geq1$.}
a martingale with law
$\P\in\mathfrak{M}_{1,\lambda}$ so that
%
\begin{equation}
\label{eq-rev23} \P\bigl[|M_n|\leq1\bigr]\geq\frac{C}{n^{\alpha}}.
\end{equation}
Our goal is to provide more precise results on $q_n$ (and
its higher dimensional generalizations).
%

As noted earlier, the problem described in
\eqref{eq-new2} is a stochastic control problem; in particular,
a dynamic programming equation for $q_n$ (with
initial condition $M_0=x$)
can be written down; indeed, such a dynamic
programming equation [see \eqref{eq-rev1}]
will play an important role
in our analysis.
However, two aspects of the problem sets
it apart from much of the stochastic control literature.
First, we are interested here in precise asymptotic
results, and not so much on existence and regularity
results for the dynamic programming equation
or on the structure of the optimal control.
Second, in much of the \textit{quantitative}
aspects of
stochastic control theory where exact performance bounds are
available,
one is given a specific martingale and the control
appears multiplicatively or additively in the evolution
of the process; this is not the case here.
See, for example, \cite{BMR} for a (continuous time) problem
where jump processes are involved and the control
influences the size of the jumps.

Returning to the multidimensional case, there is an analogue
of the above stochastic control problem in the continuous time/space setup,
namely in the context of control of diffusion processes. Specifically,
the controlled process is
%
\begin{equation}
\label{eq-rev1a} dX_t^\sigma=\sigma_t
\,dW_t,\qquad X_0^\sigma=x ,
\end{equation}
where $\sigma_t$ is an adapted control taking values in a subset
${\mathcal M}$
of
the space of matrices,
$W_\cdot$ is a ($d$-dimensional) Brownian motion, and the payoff
is $\E[g(X_T)]$ for some fixed $T$ and continuous, bounded function $g$.
As explained in
\cite{Krylov}, Chapter~4, the value function
\[
u(x,t)=\sup_{\sigma\in{\mathcal A}} \E^{x,T-t} \bigl[g
\bigl(X^\sigma_T\bigr)\bigr],
\]
where ${\mathcal A}$ is the set of adapted controls as above, satisfies the
dynamic programming equation
%
\begin{equation}
\label{eq-rev2} \partial_t u -\frac{1}2 \sup
_{\sigma\in{\mathcal M} } \bigl(\operatorname{Tr} \bigl(\sigma\sigma^T
D^2 u\bigr)\bigr)=0 ,\qquad u(x,0)=g(x).
\end{equation}
Here, $D^2u$ denotes the Hessian of $u$ and $\operatorname{Tr}$ is the trace
operator. Equation \eqref{eq-rev2} is
a fully nonlinear, parabolic partial differential equation;
see \cite{CC} for background.
The analogue of computing the asymptotics of $q_n$ then is the
problem of computing the asymptotics of $u$ when $g$ is a nonnegative
function of compact support, as $T\to\infty$.
See \cite{mc1,KPV} for early work in this direction.
A rather complete description of the asymptotics was given recently
by \cite{AT}.
In particular, it is shown there
that if the
control $\sigma$ is
restricted to be uniformly elliptic
and bounded above, then there exists $\alpha> 0$ such that
\[
\sup_{\sigma\in{\mathcal A}} \P \bigl[ \bigl|X_t^\sigma\bigr|\leq1
\bigr] \sim t^{-\alpha}.
\]
Since \eqref{eq-rev2} is invariant under the change of scale
$(x,t)\mapsto(\sqrt{\theta}x, \theta t)$,
one expects that, in fact,
\[
\theta^\alpha u(\sqrt{\theta}x,\theta t)\longrightarrow \Phi(x,t)
\qquad\mbox{as } \theta\to\infty,
\]
uniformly on compact subsets of $\R^d\times(0,\infty)$, where
$\Phi$ satisfies the scaling relation
$\Phi(x,t)=\theta^\alpha\Phi(\sqrt{\theta} x, \theta t)$ for every
$\theta>0$.
Indeed, this
is precisely what is proved in \cite{AT}.
The exponent $\alpha$ is determined by the solution to a nonlinear
eigenvalue problem and typically we have $\alpha< d/2$.

Our goal in this paper is to provide a similar analysis of the discrete
time setup. At a heuristic level, one expects that scaling the discrete time
problem would lead to the continuous diffusion setup. Note, however,
that when rescaling, the initial conditions become singular, preventing
a direct application of the continuous time theory and
representing a significant technical challenge.
Our analysis therefore builds on \cite{AT} but requires significant
modifications. We present here two corollaries of our main result,
Theorem~\ref{t.alpha}. In what follows, $|\cdot|$ denotes the
Euclidean norm.

In the first corollary, we consider uniformly elliptic martingales: the
conditional variance of the projection of the jump
in any direction is bounded below uniformly. This is the natural
generalization of the one dimensional setup discussed in \cite{GPZ}.

\begin{corollary}[(Uniformly elliptic martingales)]
\label{cor-elliptic}
Fix $\lambda\in(0,1]$ and $R\geq\sqrt{2d}$ and let $\mathfrak
{M}_{d,\lambda,R}$ denote the
collection of laws of discrete time
martingales of the form
\[
M_n=\sum_{i=1}^n
\Delta_i\in\mathbb{R}^d
\]
satisfying
%
\begin{equation}
|\Delta_i|\leq R \qquad\mbox{a.s.}
\end{equation}
and
%
\begin{eqnarray}
\lambda&\leq&\inf_{v\in\mathbb{R}^d, | v| =1} \E \biggl[ \frac{1}2 (
\Delta_i \cdot v )^2\Big \vert\F_{i-1} \biggr]
\nonumber
\\[-8pt]
\\[-8pt]
\nonumber
 &\leq&
\sup_{v\in\mathbb{R}^d, | v|
=1} \E \biggl[ \frac{1}2(\Delta_i
\cdot v )^2 \Big\vert\F_{i-1} \biggr] \leq1\qquad \mbox{a.s.}
\end{eqnarray}
Then there exist constants $\alpha=\alpha(d,\lambda)
>0$ and $C=C(d,\lambda,R)\geq1$ such that, for all $n$ sufficiently large,
%
\begin{equation}
\label{eq-ellip} 
\frac{1}{Cn^\alpha}\leq \sup_{\P\in\mathfrak{M}_{d,\lambda,R}} \P\bigl [
|M_n|\leq\sqrt {d} R \bigr] \leq \frac{C}{n^\alpha}.
\end{equation}
\end{corollary}

Note that in case $d=1$, one recovers \eqref{eq-rev23}, in the strong
form of providing an asymptotic rate of decay.
We discuss at the end of the introduction quantitative properties of
the exponent $\alpha=\alpha(d,\lambda)$.

In our second corollary, we consider a nonuniformly
elliptic martingale, where
the control influences the direction of the jumps
of the martingale but
not the magnitude. This
answers a question communicated to us by Peres; after the work
on this paper was completed, we learned of an independent, different
proof of
the corollary, due to Lee, Peres and Smart \cite{LPS}.
We obtain the result as a
consequence of our general strategy of comparison to the
PDE satisfied in the continuous time setup, whereas \cite{LPS} first
reduces the question to a problem in two dimensions and then use a
mixture of probabilistic
and analytical arguments to analyze the latter.

\begin{corollary}
\label{cor-fixednorm}
Let $M_n$ be an $\mathbb{R}^d$-valued martingale adapted to a filtration
$\F_n$ with $X_0 = 0$ which satisfies, for some $\lambda\in(0,1]$,
\[
 \P \bigl[ |M_{n+1} - M_n| \leq1 \bigr] = 1
\]
and
\[
 \E \bigl[ |M_{n+1} - M_n|^2
\vert\F_n \bigr] = \lambda^2.
\]
Then there exists $C=C(\lambda)\ge1$ such that
\[
 \P\bigl [ |M_n| \leq1 \bigr] \leq C n^{-1/2}.
\]
\end{corollary}

Of course, by scaling, the constant $1$ appearing inside the
probabilities in
Corollary~\ref{cor-fixednorm} can be changed to any fixed constant.
Note that the exponent $1/2$ in Corollary~\ref{cor-fixednorm} is sharp
in every dimension, as exhibited by the local CLT for a simple random
walk in one of the coordinate directions.

We conclude this \hyperref[sec1]{Introduction} with some comments concerning
the exponent $\alpha$ in Corollary~\ref{cor-elliptic}: we will prove below
that
%
\begin{equation}
\label{e.alphabounds} \frac{d\lambda}{2} \leq\alpha(d,\lambda) \leq\frac{(d-1)\lambda
}{2} +
\frac{1}2
\end{equation}
and each of the two inequalities in \eqref{e.alphabounds} is an
equality if and only if $\lambda=1$. Notice in particular that this
implies that $\alpha(d,\lambda) < d/2$ if $\lambda< 1$, which means
that the quantity $\sup_{\P\in\mathfrak{M}_{d,\lambda,R}} \P [
|M_n|\leq\sqrt{2} \,dR  ]$ decays at a slower rate than for a
simple random walk. It was previously observed in \cite{GPZ} in the
discrete setup for $d=1$ that $\alpha<1/2$ if $\lambda<1$.
We generalize this to arbitrary dimension and
obtain the statement that $\alpha< d/2$ for general controlled,
uniformly elliptic martingales, provided that the set of controls has
at least two elements. Both the latter statement as well as the
bounds \eqref{e.alphabounds} were proved in \cite{AT}, (3.20), in the
continuum framework, and they apply in our discrete setup since, as we
will see, our exponent $\alpha$ is the same as the one corresponding to
the minimal Pucci operator from \cite{AT}.

It is also of interest to study the behavior of the exponent $\alpha
(d,\lambda)$ as $\lambda\to0$. Here, the estimate \eqref
{e.alphabounds} is not very sharp on either side, and it turns out
that, except for a possible sub-algebraic correction, $\alpha
(d,\lambda
) \sim\lambda^{1/4}$. Precisely, for each $\delta\in(0,1/4)$, there
exist constants $C(d,\delta)>1$ and $c(d,\delta)>0$ such that, for
every $\lambda\in(0,1]$,
%
\begin{equation}
\label{e.alphabounds2} c \lambda^{1/4+\delta} \leq\alpha(d,\lambda) \leq C \lambda
^{1/4-\delta}.
\end{equation}
In particular, $\alpha(d,\lambda) \to0$ as $\lambda\to0$ and
\[
\lim_{\lambda\to0}\limsup_{n\to\infty} \frac{|\log\sup_{\P\in
\mathfrak{M}_{d,\lambda,R}}\P [ |M_n|\leq\sqrt{d}R
]|}{\log
n} =
0 ,
\]
which was previously proved for $d=1$ in \cite{GPZ}. The interpretation
is that, for a controlled martingale, the quantity $\P [
|M_n|\leq
\sqrt{d} R ]$ may decay at an arbitrarily slow (algebraic) rate in
$n$ provided that the set of controls is sufficiently rich. The
bounds \eqref{e.alphabounds2} are new and follow from test function
calculations in Section~\ref{s.asscheck}.

In the next section, we state our precise assumptions and the main result,
Theorem~\ref{t.alpha}, the proof of which comes in Section~\ref{s.proof}.
We also show in Section~\ref{sec-2}
how Corollary~\ref{cor-fixednorm} follows from Theorem~\ref{t.alpha}.
The proofs of Corollary~\ref{cor-elliptic} comes in Section~\ref
{s.asscheck}, as well as a discussion of how to estimate $\alpha$ and
the proofs of \eqref{e.alphabounds} and~\eqref{e.alphabounds2}.

\section{Setup and main results}
\label{sec-2}

\subsection{Notation and assumptions}
Throughout the paper, we work in dimension
$d\geq1$. For $r>0$ and $x\in\mathbb{R}^d$, we let $B_r(x)$ denote
the open
ball of radius $r$ centered at $x\in\mathbb{R}^d$ and denote by
$\overline{B}_r(x)$
its closure. We also set $B_r:= B_r(0)$ and $\overline{B}_r:=\overline
{B}_r(0)$.

\begin{definition}
For each $R\geq1$, we define $\mathcal M_R(\mathbb{R}^d)$ to be the
family of
centered Borel probability measures supported on $B_R$. That is, for
every $\mu\in\mathcal M(\mathbb{R}^d)$ and with $X$ the
canonical random variable on $\mathbb{R}^d$, we have
%
\begin{equation}
\label{e.centering} \E_\mu [ X ] = 0
\end{equation}
and
%
\begin{equation}
\label{e.local} \P_\mu \bigl[ \llvert X\rrvert \leq R \bigr] = 1.
\end{equation}
We also set, for each $\lambda\in(0,1]$ and $R\geq\sqrt{2d}$,
%
\begin{equation}
\label{e.ellipticity} \mathcal E_{\lambda,R}\bigl(\mathbb{R}^d\bigr):=
\bigl\{ \mu\in\mathcal M_R\bigl(\mathbb{R}^d\bigr):
\lambda I_d \leq\tfrac{1}2 \E_\mu \bigl[
XX^t \bigr] \leq I_d \bigr\}.
\end{equation}
Here, $I_d$ denotes the $d\times d$ identity matrix, and if $A$ and $B$
are symmetric matrices, then we write $A\leq B$ in the case that $B-A$
is nonnegative definite.
\end{definition}

Given a Borel
subset $\mathcal P\subseteq\mathcal{M}_R(\mathbb{R}^d)$
(the \textit{control}) and a point $x\in\mathbb{R}^d$,
we introduce the family of controlled martingales $\{(X_n,u_n)\}_{n\geq0}$
with $X_n\in\mathbb{R}^d$, $X_0=x$,
$\F_n=\sigma(X_1,\ldots,X_n)$, so that the control $u_n\in
\mathcal P$ is $\F_n$ measurable and, conditioned on
$\mathcal{F}_n$, $X_{n+1}-X_n$ is distributed according to $u_n$.
With an abuse of notation, we denote by ${\mathcal P}_n$ the class of
\textit{admissible controls}, that is, those sequences ${\mathbf
u}=(u_1,\ldots,u_n)$
satisfying the
above restrictions, and we let $\P^x$ denote the
law of the sequence $\{(X_n,u_n)\}_{n\geq0}$.
In this setup, we are interested in the evaluation, for fixed $\delta>
0$, of the quantity
%
\begin{equation}
\label{eq-control} \sup_{ \mathbf{ u}\in{\mathcal P}_n} \P^x [X_n\in
\overline B_\delta ].
\end{equation}

\begin{remark}
\label{rem-1}
It is natural to also consider the dual problem, that is, the quantity
%
\begin{equation}
\label{eq-control1} \inf_{ \mathbf{ u}\in{\mathcal P}_n} \P^x [X_n\in
\overline B_\delta ].
\end{equation}
The analysis required is similar and we comment on it in
Section~\ref{sec-moncontrol} below.
\end{remark}

We next introduce the value function, which satisfies the
dynamic programming equation.

\begin{definition}[(The value function $w$)]
\label{def.w}
Given a Borel set
$\mathcal P\subseteq\mathcal{M}_R(\mathbb{R}^d)$ and $\delta> 0$,
we define a function $w:\mathbb{R}^d\times\N\to\R$ by setting
\[
 w(x,0) := \cases{ 1, &\quad $\mbox{if } x\in\overline
B_\delta$,
\cr
0, &\quad  $\mbox{if } x\in\mathbb{R}^d\setminus
\overline B_\delta$,}
\]
and then defining $w(\cdot,n)$ inductively by
%
\begin{equation}
\label{eq-rev1} w(x,n+1) : = \sup_{\rho\in\mathcal P} \E_\rho
\bigl[ w(x+X,n) \bigr].
\end{equation}
\end{definition}

(The assumption that $\mathcal P$ is Borel ensures that the function
$w(\cdot,n)$ as determined by \eqref{eq-rev1}
can be integrated against any probability measure;
see \cite{BS}, Chapter~7.8.) It is clear that $w(x,n)$ equals the
expression in \eqref{eq-control}.

Our interest lies in the asymptotic behavior of $w(x,n)$ for large $n$.
We prove our main result under two additional assumptions, stated
below. These assumptions can be quickly checked for large classes of
examples, as we show in Section~\ref{s.asscheck}. Before stating these,
we first introduce some further notation.

\begin{definition}[(The operator $F^-$)]
Given $\mathcal P \subseteq\mathcal M_R(\mathbb{R}^d)$, we define the
operator $F^-$ on the space $C(\mathbb{R}^d)$ of continuous functions by
%
\begin{equation}
\label{e.F} F^- [ \phi ](x) := \phi(x) - \sup_{\rho\in\mathcal P}
\E_\rho \bigl[ \phi(X+x ) \bigr].
\end{equation}
We extend the definition of $F^-$ to merely locally
bounded functions $\phi$ by setting
\[
 F^- [ \phi ](x) := \phi(x) - \sup_{\rho\in\mathcal P} \sup
_{\psi\in C(\mathbb{R}^d), \psi\leq\phi} \E_\rho \bigl[ \psi(X+x ) \bigr].
\]
(The extension allows $F^-$ to act even on nonmeasurable functions, as long
as they are locally bounded.)
By abuse of notation, we also use
$F^-$ to denote the functions $\mathbb{S}^d\to\R$ (here,
$\mathbb{S}^d$ denotes nonnegative definite $d$-by-$d$ matrices)
given by
$M \mapsto
F^- [ \phi_M  ]$, where $\phi_M$ is any quadratic function with
Hessian $M \in\mathbb{S}^d$, that is, we define
\[
 F^-(M) := - \frac{1}2 \sup_{\rho\in\mathcal P}
\E_\rho [ X \cdot MX ]. 
\]
Note that, by \eqref{e.centering}, $F^-[\phi]$ is unchanged if we add an
affine function to $\phi$. In particular, for every $M\in\mathbb{S}^d$,
$p\in\mathbb{R}^d$, $a\in\R$ and quadratic $\phi(x):= \frac{1}2
x \cdot Mx + p\cdot x + a$,
we see that $F^- [ \phi ]=F^-(M)$.
\end{definition}

In general, $u \mapsto F^-(D^2u)$
is a fully nonlinear, concave, (possibly
degenerate) elliptic operator. We refer to \cite{CC,Krylov} for an
introduction to fully nonlinear elliptic PDEs. In the case that
$\mathcal P \subseteq\mathcal E_{\lambda,R}(\mathbb{R}^d)$ for some
$\lambda
> 0$,
then $F^-$ is uniformly elliptic.
If $\mathcal P = \mathcal E_{\lambda,R}(\mathbb{R}^d)$, then the
operator $F^-$
coincides with the minimal Pucci operator with
ellipticity constants $\lambda$ and $1$ (as defined, up to a sign
convention, in \cite{CC}).

As explained at the beginning of Section~\ref{s.proof}, the operator
$F^-[\cdot]$ describes the evolution of the discrete control problem
and in particular the function $w$, while $F^-(D^2\cdot)$ describes the
continuous control problem which approximates the discrete problem on
large scales (see Lemma~\ref{l.consistency}).

In the rest of the paper, except in Section~\ref{sec-moncontrol}
and Section~\ref{s.asscheck}, we write $F=F^-$. To aid our
computations, we note that, for all $t\geq0$ and locally bounded
functions $\phi,\psi: \mathbb{R}^d\to\R$,
we have
%
\begin{equation}
\label{e.poshomo} F [ t\phi ] = t F [ \phi ]
\end{equation}
and
%
\begin{equation}
\label{e.sublinear} F[\phi] + F[\psi] \leq F[\phi+\psi].
\end{equation}

We next present our two assumptions.

\begin{assumption}[($F$ admits a self-similar solution)]
\label{ass.Phi}
There exist $\alpha> 0$, $\sigma\in(0,1]$ and a solution $\Phi\in
C^2(\mathbb{R}^d\times(0,\infty))$ of the fully nonlinear (possibly
degenerate) parabolic partial differential equation
%
\begin{equation}
\label{e.Phieq} \partial_t \Phi+ F\bigl(D^2\Phi\bigr) = 0\qquad
\mbox{in } \mathbb{R}^d\times (0,\infty),
\end{equation}
which satisfies
%
\begin{eqnarray}
\label{e.positive} \Phi&>& 0 \qquad\mbox{in } \mathbb{R}^d\times(0,\infty),
\\
\label{e.selfsim} \Phi(\sqrt\lambda x,\lambda t) &=& \lambda^{-\alpha} \Phi(x,t)\qquad
\mbox {for every } \lambda> 0,
\end{eqnarray}
and $\Phi$ decays like a Gaussian up to $C^{2,\sigma}$: that is, there
exist constants $a>0$ and $K>1$ such that
%
\begin{equation}
\label{e.C2sigma} \Phi(\cdot,1) \in C^{2,\sigma} \bigl(\mathbb{R}^d
\bigr)
\end{equation}
and, for every $x\in\mathbb{R}^d$,
%
\begin{equation}
\label{e.decay} \bigl\llVert \Phi(\cdot,1) \bigr\rrVert _{C^{2,\sigma}(B_1(x))} \leq K
\exp \bigl( -a|x|^2 \bigr). %
\end{equation}
\end{assumption}

The first part of Assumption~\ref{ass.Phi}
holds in the uniformly elliptic case, $\mathcal P \subseteq
\mathcal E_{\lambda,R}(\mathbb{R}^d)$, by the results in \cite{AT}.
We have made it into an assumption with an eye toward the application
to Corollary~\ref{cor-fixednorm}. In Section~\ref{sec-4.2}, we verify that
the second part of the
assumption, namely \eqref{e.decay}, holds as well in the uniformly elliptic
case. We note that the regularity in \eqref{e.decay}
is key in relating the discrete and continuous control problems.

The next assumption is specific to the discrete setup and allows to control
the behavior of the discrete control problem before convergence to the
continuous problem is achieved.

\begin{assumption}[(Behavior of $w$ up to finite times)]
\label{ass.w}
We have:
\begin{longlist}[(ii)]
\item[(i)] For every $r>0$, there exists $N_0(r)>1$ such that, for
every $N\geq N_0(r)$,
%
\begin{equation}
\label{e.assbulk} \inf \bigl\{ w(x,N) : |x| \leq r\sqrt{N \log N} \bigr\} >0.
\end{equation}
\item[(ii)] For every $r>1$ and $n\in\N$, there exists $C( r,n)>1$ such that
%
\begin{equation}
\label{e.asstails} w(x,n) \leq C \exp \biggl( -r \frac{|x|}{\sqrt{n}} \biggr)\qquad \mbox
{for every } x\in\mathbb{R}^d.
\end{equation}
\end{longlist}
\end{assumption}

Note that, in our setup, \eqref{e.asstails} is always satisfied, due to
Azuma's inequality \cite{azuma}.
Indeed, for any (multidimensional)
martingale $\{W_n\}$ with bounded increments,
\[
\P\bigl[|W_n+x|\leq1\bigr]\leq\P \biggl[\biggl| W_n \cdot
\frac{x}{|x|}\biggr|\geq\bigl ||x|-1\bigr| \biggr] \leq\exp
\bigl({- c\bigl(|x|-1\bigr)^2/n}
\bigr) ,
\]
where the last inequality follows from Azuma's inequality
using the fact that $ W_n \cdot x/|x|$ is a one-dimensional martingale
with bounded increments. Dealing separately with the case $|x|<\sqrt{n}+1$
and $|x|>\sqrt{n}-1$ and adjusting the constant $C(r,n)$ yields \eqref
{e.asstails}.

\subsection{Local asymptotics for $w$}
We next present the main result of the paper.

\begin{theorem}
\label{t.alpha}
\textup{(i)} Assume that Assumptions \ref{ass.Phi} and \ref{ass.w}\textup{(ii)} hold. Then
%
\begin{equation}
\label{e.theorem1-UB} \sup_{r>0} \limsup_{n\to\infty} \sup
_{x\in B_{r\sqrt{n}}} \frac{
w(x,n)}{\Phi(x,n)} < +\infty.
\end{equation}

\textup{(ii)} Assume that Assumptions \ref{ass.Phi} and \ref{ass.w}\textup{(i)} hold. Then
%
\begin{equation}
\label{e.theorem1-LB} 0 < \inf_{r>0} \liminf_{n\to\infty}
\inf_{x\in B_{r\sqrt{n}}} \frac
{w(x,n)}{\Phi(x,n)}.
\end{equation}
\end{theorem}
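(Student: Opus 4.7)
The plan is to prove both parts of Theorem~\ref{t.alpha} via a discrete comparison principle. The DP iteration $w(\cdot, n+1) = T[w(\cdot, n)]$ is based on the monotone, positively homogeneous operator $T[\phi](x) := \sup_{\rho \in \mathcal P} \E_\rho[\phi(x+X)]$, which preserves pointwise order and commutes with additive constants. Consequently, if one constructs an approximate discrete supersolution (resp.\ subsolution) $V$ of the DP equation with $V(\cdot, N) \geq w(\cdot, N)$ (resp.\ $V(\cdot, N) \leq w(\cdot, N)$) pointwise at some initial time $N$, then induction propagates the comparison to all $n \geq N$.

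The core technical step is to quantify how close $\Phi$ is to being an exact solution of the discrete DP equation. Writing $\mathcal E(x, n) := \Phi(x, n+1) - T[\Phi(\cdot, n)](x)$, a Taylor expansion around $x$ combined with the PDE \eqref{e.Phieq} (whose leading order terms cancel by design) yields
\begin{equation*}
|\mathcal E(x, n)| \leq C R^{2+\sigma}\,\|\Phi(\cdot, n)\|_{C^{2,\sigma}(B_R(x))} + \tfrac{1}{2}\sup_{t \in [n, n+1]}|\partial_{tt}\Phi(x, t)|.
\end{equation*}
Thanks to the self-similarity \eqref{e.selfsim} and the $C^{2,\sigma}$--Gaussian estimate \eqref{e.decay}, both terms are pointwise bounded by $C n^{-\alpha - 1 - \sigma/2}\, e^{-c|x|^2/n}$. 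The per-step error is therefore summable in $n$ and, crucially, of smaller order than $\Phi \sim n^{-\alpha}$.

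For part (i), consider a multiplicatively corrected supersolution of the form $V(x, n) := C_0 \Phi(x, n + T_0)\,(1 + \gamma(n))$, where $\gamma(n)\downarrow 0$ is a small positive correction chosen, in view of the per-step error estimate, so that $V$ is an exact discrete supersolution (the positive homogeneity of $T$ is what allows $\gamma$ to absorb the error proportionally to $\Phi$ rather than as a constant additive term that would blow up the ratio $w/\Phi$). The constants $T_0$ and $C_0$ are picked large enough to ensure $V(\cdot, 0) \geq w(\cdot, 0) = \mathbf{1}_{\overline B_\delta}$, which uses only continuity and strict positivity of $\Phi(\cdot, T_0)$. The role of Assumption~\ref{ass.w}(ii) is to control the tails of $w$ in the region where $|\mathcal E|/\Phi$ could degenerate: Azuma-type exponential decay of $w$ lets one restrict the comparison argument to $|x| \leq R\sqrt n$ for $R$ large and bound $w/\Phi$ directly outside. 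Propagation by monotonicity of $T$, together with the self-similarity estimate $\Phi(\cdot, n+T_0) \asymp \Phi(\cdot, n)$ on $B_{r\sqrt n}$, then yields \eqref{e.theorem1-UB}.

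For part (ii), fix $r > 0$ and apply Assumption~\ref{ass.w}(i) (with, say, radius $2r$) to find a large $N$ such that $w(\cdot, N) \geq c_N > 0$ on $B_{2r\sqrt{N\log N}}$. Since $\Phi(\cdot, N) \leq K N^{-\alpha}$ uniformly by self-similarity and \eqref{e.decay}, a small $c > 0$ yields $c\Phi(\cdot, N) \leq w(\cdot, N)$ on this ball. The main obstacle, and the most delicate part of the proof, is that this inequality cannot extend pointwise to all of $\Rd$: $\Phi > 0$ everywhere while $w$ may vanish outside the ball, so naive pointwise comparison with $c\Phi$ fails. The remedy is to construct a modified subsolution $V(x, n) = c\Phi(x, n) - \eta(x, n)$, where $\eta \geq 0$ is a barrier chosen so that (a) $\eta(\cdot, N) \geq c\Phi(\cdot, N)$ outside $B_{2r\sqrt{N\log N}}$, forcing $V \leq 0 \leq w$ there; (b) the per-step error bound above leaves enough room for $V$ to remain an approximate discrete subsolution; and (c) $\eta/\Phi \to 0$ on $B_{r\sqrt n}$ as $n\to\infty$, so the propagated inequality $V \leq w$ translates into $w \gtrsim \Phi$ on $B_{r\sqrt n}$ and hence \eqref{e.theorem1-LB}. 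The key difficulty is this barrier construction: the corrector must simultaneously be large enough to dominate $c\Phi$ in the tail at time $N$, small enough on $B_{r\sqrt n}$ for $n \gg N$, and reasonably behaved under the DP iteration so that $V$ is an exact subsolution after absorbing the Taylor error.
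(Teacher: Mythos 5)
Your overall architecture, consistency error estimate, comparison-via-monotonicity of the DP step, and the realization that a purely multiplicative correction to $\Phi$ can absorb the error proportionally in the bulk, are all in line with what the paper does (the paper's $\Phi_{\pm\theta}=\exp(\pm\theta^{-1}t^{-\theta})\Phi$ is exactly such a multiplicative bend). However, both halves of your proposal have the same unresolved gap: the behavior in the tail $|x|\gtrsim\sqrt t$, and this is precisely where the real work in the paper lies.

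For the upper bound, the per-step error you write down is $\lesssim t^{-1-\alpha-\sigma/2}\exp(-a|x|^2/2t)$, while any multiplicative term $\gamma(n)\Phi(x,n)\sim t^{-\alpha}\gamma(t)\exp(-a|x|^2/t)$ has a \emph{sharper} Gaussian in the exponent. Consequently $|\mathcal E|/\Phi$ is genuinely unbounded as $|x|/\sqrt t\to\infty$, and no choice of $\gamma(\cdot)$ can absorb the error there. Your remedy --- ``restrict the comparison to $|x|\le R\sqrt n$ and bound $w/\Phi$ directly outside'' --- does not close this: the DP step $w(x,n+1)=\sup_\rho\E_\rho[w(x+X,n)]$ couples values across any ball boundary, so you cannot run a comparison argument on a moving ball without a barrier on its boundary, and outside the ball the ratio $w/\Phi$ is in fact not automatically bounded because the Gaussian constant $a$ coming from \eqref{e.decay} need not match the one in Azuma's bound for $w$. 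For the lower bound, you correctly recognize that the additive barrier $\eta$ is the crux and that it must simultaneously dominate $c\Phi$ in the tail at time $N$ and stay negligible on $B_{r\sqrt n}$ as $n\to\infty$, but you do not construct it and explicitly flag this as ``the key difficulty.''

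The single missing ingredient, filling both gaps at once, is the auxiliary function
\begin{equation*}
\Psi(x,t)=t^{-\beta}\exp\!\left(-\beta\left(1+\tfrac{|x|^2}{t}\right)^{1/2}\right),\qquad \alpha+\theta<\beta<\alpha+\tfrac\sigma2,
\end{equation*}
together with the computation (the paper's Lemma~\ref{l.Psi}) showing that $\Psi$ is an approximate supersolution of the finite difference scheme with a \emph{favorable strict sign} $\sim ct^{-1-\beta}\Psi\,|x|/\sqrt t$ when $|x|\ge C\sqrt t$. Its sub-Gaussian tail $\exp(-\beta|x|/\sqrt t)$ is fat enough to dominate the consistency error $\exp(-a|x|^2/2t)$ in the tail (this is \eqref{e.fattailswin}), yet it still vanishes in the ratio $\Psi/\Phi\sim t^{\alpha-\beta}\to0$ on $B_{r\sqrt t}$ since $\beta>\alpha$. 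Using $\xi=\Phi_{-\theta}+\Psi$ as the global supersolution (with $\Psi$ also dominating $w(\cdot,N)$ via Assumption~\ref{ass.w}(ii)) and $\zeta=\Phi_\theta-s\Psi$ as the global subsolution (with $\Psi$ truncating the support of $\zeta_+$ inside $B_{C\sqrt{N\log N}}$ so Assumption~\ref{ass.w}(i) applies) then closes both parts by the comparison lemma. Without an explicit construction of a barrier with these properties and a verification that it is a genuine one-step super/subsolution in the tail, your proposal cannot be completed.
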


Observe that, in view of \eqref{e.positive} and \eqref{e.selfsim}, the
inequalities  \eqref{e.theorem1-UB} and \eqref{e.theorem1-LB} together
imply that, for every $r>0$,
\[
 0< \liminf_{n\to\infty} \inf_{x\in B_{r\sqrt{n}}}
n^\alpha w(x,n) \leq \limsup_{n\to\infty} \sup
_{x\in B_{r\sqrt{n}}} n^\alpha w(x,n) < \infty.
\]
These can be compared to the conclusions of Corollaries \ref
{cor-elliptic} and \ref{cor-fixednorm}.

Given the conclusion of Theorem~\ref{t.alpha}, it is natural to expect
a stronger statement to hold, namely a full local limit theorem for
$w$: that is, for some constant $L>0$,
%
\begin{equation}
\label{e.locallimit} \sup_{r>0} \limsup_{n\to\infty} \sup
_{x\in B_{r\sqrt{n}}} \biggl\llvert \frac
{ w(x,n)}{\Phi(x,n)} -L \biggr\rrvert = 0.
\end{equation}
While our setup may be a bit too general for \eqref{e.locallimit}, we
do expect it to hold, for instance, in the uniformly elliptic setting
($\mathcal P \subseteq\mathcal E_{\lambda,R}$). Indeed, this is
relatively easy to obtain from Theorem~\ref{t.alpha} and the test
functions in Section~\ref{s.proof}, provided we have at our disposal
some regularity theory for uniformly parabolic finite difference
equations (which we would apply to $w$). We could not find such a
result matching our situation. We speculate that one could derive it
from adaptations of known techniques, however developing such a
regularity theory would take us too far astray from the focus of this
paper, and so we do not prove \eqref{e.locallimit}.

To further motivate our assumptions, we bring now the proof
of Corollary~\ref{cor-fixednorm}.
That is, we consider the particular case of martingales
whose increments are bounded with norm of constant second moment.
\begin{pf*}{Proof of Corollary~\ref{cor-fixednorm}}
For some $\lambda\in(0,1]$, we set
\[
 \mathcal P : = \bigl\{ \rho\in\mathcal M_1\bigl(
\mathbb{R}^d\bigr) : \E_\rho \bigl[ |X|^2
\bigr] = \lambda \bigr\}.
\]
We easily check that the operator $F^-$ can be expressed by
\[
 F^-(M) = -\tfrac{1}2\lambda\cdot\mbox{(largest eigenvalue
of $M$)}.
\]
By a direct computation,
Assumption~\ref{ass.Phi} holds for $F=F^-$ with $\alpha= \frac{1}2$ and
\[
 \Phi(x,t) := t^{-{1}/2} \exp \biggl( -\frac{|x|^2}{2\lambda t}
\biggr).
\]
As noted before,
Azuma's inequality implies that Assumption~\ref{ass.w}(ii) holds.
We therefore obtain Corollary~\ref{cor-fixednorm} as a consequence of
Theorem~\ref{t.alpha}(i).
\end{pf*}

\subsection{Minimal probabilities}
\label{sec-moncontrol}
As discussed in Remark~\ref{rem-1}, it is natural to consider the
optimal control problem \eqref{eq-control1} instead of
\eqref{eq-control}, with associated value function $v$ satisfying the
dynamic programming equation
\[
v(x,n+1)=\inf_{\rho\in\mathcal P}\E_\rho v(x+X,n),\qquad v(x,0)=w(x,0).
\]
The analysis is similar,
with the operator $F^-$
replaced by the operator
\[
 F^+ [ \phi ](x) := \phi(x) - \inf_{\rho\in\mathcal P} \inf
_{\psi\in C(\mathbb{R}^d), \psi\geq\phi} \E_\rho \bigl[ \psi(X+x ) \bigr] ,
\]
and a similar definition for $F^+(M), M\in\mathbb{S}^d$. In the
analysis, the
relations
\[
F^+[-\phi]=-F^{-}[\phi],\qquad F^+[\phi]\geq F^-[\phi]
\]
and
\[
F^+[\phi]+F^-[\psi]\leq F^+[\phi+\psi]\leq F^+[\phi]+F^+[\phi]
\]
come in handy. Using now $F=F^+$ and
replacing $w$ by $v$ in Assumption~\ref{ass.w}, one then obtains
Theorem~\ref{t.alpha} for $v$.

We remark that, in some natural situations,
the assumption \eqref{e.assbulk} holds
for $w$ but not for $v$. For an example,
see the
setup of Corollary~\ref{cor-fixednorm} with $d\geq2$.
In that situation, one can use,
when at $x\neq0$,
controls in a direction tangential
to the sphere centered at the origin and passing through $x$,
to conclude
that $v(x,n)=0$ if
$|x|>2\sqrt{d}\delta$ and
$n\geq0$. Thus, Assumption~\ref{ass.w}(i) does not hold for $v$.


\section{Proof of Theorem \texorpdfstring{\protect\ref{t.alpha}}{2.7}}
\label{s.proof}

In this section, we prove the local limit theorem for the value
function $w$. We proceed by presenting some lemmas needed in the
argument, beginning with some basic properties of the finite difference
equation.
Throughout, we assume that $\mathcal{P}$ is a fixed Borel subset of
$\mathcal{M}_R(\R^d)$ for some fixed $R\geq\sqrt{2d}$.

Recall that the equation satisfied by $w$ is
%
\begin{equation}
\label{e.FDsimple} w(x,n+1) = \sup_{\rho\in\mathcal P} \E_\rho \bigl[
w(x+X,n) \bigr].
\end{equation}
It can be written in the equivalent form
%
\begin{equation}
\label{e.FD} w(x,n+1) - w(x,n) + F \bigl[ w(\cdot,n) \bigr](x) = 0,
\end{equation}
which is an explicit finite difference scheme for the (continuum)
parabolic equation
%
\begin{equation}
\label{e.cont} w_t + F\bigl(D^2w\bigr) = 0.
\end{equation}
We first record the fact that the scheme is in fact \emph{consistent}
with \eqref{e.cont}.

\begin{lemma}
\label{l.consistency}
There exists $C(R)>0$ such that, for every $\sigma\in(0,1]$, $\varphi
\in C^{2,\sigma}(\mathbb{R}^d)$ and $x\in\mathbb{R}^d$,
\[
 \bigl\llvert F [ \varphi ](x) - F\bigl(D^2\varphi(x)
\bigr) \bigr\rrvert \leq C \bigl[ D^2 \varphi \bigr]_{C^{\sigma}(B_R(x))}.
\]
\end{lemma}
\begin{pf}
It is enough to consider the case $x=0$.
We have
\[
-F [ \varphi ](0) + F\bigl(D^2\varphi(0)\bigr) = \sup
_{\rho\in\mathcal P} \E_\rho \bigl[ \varphi(X) - \varphi (0) \bigr]
- \frac{1}2 \sup_{\rho\in\mathcal P} \E_\rho \bigl[ X\cdot
D^2\varphi(0) X \bigr].
\]
%
This implies
%
\[
 \bigl\llvert F [ \varphi ](0) - F\bigl(D^2\varphi(0)
\bigr) \bigr\rrvert \leq\sup_{\rho\in\mathcal P} \biggl\llvert
\E_\rho \biggl[ \varphi(X) - \varphi(0) - \frac{1}2 X\cdot
D^2\varphi (0) X \biggr] \biggr\rrvert .
\]
Using the centering condition and then Taylor's formula, we find that,
for any $\rho\in\mathcal P$,
\begin{eqnarray*}
&&\biggl| \E_\rho \biggl[ \varphi(X) - \varphi(0) -
\frac{1}2 X\cdot D^2\varphi(0) X \biggr] \biggr|
\\
& &\qquad\leq \E_\rho \biggl[ \biggl| \varphi(X) - \varphi(0) - X \cdot D
\varphi(0) - \frac{1}2 X\cdot D^2\varphi(0) X \biggr| \biggr]
\\
&&\qquad \leq\sup_{y\in\overline B_R} \biggl| \varphi(y) - \varphi(0) - y \cdot D
\varphi(0) - \frac{1}2 y\cdot D^2\varphi(0) y \biggr|
\\
&&\qquad \leq C \bigl[ D^2 \varphi \bigr]_{C^\sigma(B_R)}.
\end{eqnarray*}
Note that we used both \eqref{e.centering} and \eqref{e.local} in the
third line and then Taylor's formula in the last line above.
\end{pf}

We next check that the finite difference scheme \eqref{e.FD} is \emph
{monotone}, that is, it satisfies a comparison principle.

\begin{lemma}
\label{l.monotonicity}
Assume $u, v:\mathbb{R}^d\to\R$ are locally bounded and
satisfy, for each $x\in\mathbb{R}^d$ and $n\in\N$,
\[
 \cases{ u(x,n+1) - u(x,n) + F \bigl[ u(\cdot,n) \bigr](x) \leq0,
\vspace*{2pt}\cr
v(x,n+1) - v(x,n) + F \bigl[ v(\cdot,n) \bigr](x) \geq0,
\vspace*{2pt}\cr
u(x,0) \leq
v(x,0).}
\]
Then $u \leq v$ in $\mathbb{R}^d\times\N$.
\end{lemma}
\begin{pf}
Using the form \eqref{e.FDsimple} rather than \eqref{e.FD}, we observe
that, for every $x\in\mathbb{R}^d$,
\begin{eqnarray*}
 u(x,1) &\leq&\sup_{\rho\in\mathcal P} \E_\rho \bigl[
u(x+X,0) \bigr] \\
&\leq&\sup_{\rho\in\mathcal P} \E_\rho \bigl[
v(x+X,0) \bigr] \leq v(x,1).
\end{eqnarray*}
The lemma now follows by induction.
\end{pf}

The proof of Theorem~\ref{t.alpha} requires a test function
calculation, similar to the one in \cite{AT}, Lemma~4.4. The result is
summarized in the following lemma.

\begin{lemma}
\label{l.Psi}
Fix $\beta> 0$ and consider the function
%
\begin{equation}
\label{e.Psi} \Psi(x,t) := t^{-\beta} \exp \biggl( -\beta \biggl( 1+
\frac{|x|^2}t \biggr)^{1/2} \biggr).
\end{equation}
Then there exist $C(d,R,\beta)>1$ and $c(d,R,\beta)>0$ such that, for
every $x\in\mathbb{R}^d$ and $t\geq C$,
%
\begin{eqnarray}
\label{e.Psieq} &&\Psi(x,t+1) - \Psi(x,t) + F \bigl[ \Psi(\cdot,t) \bigr] (x)
\nonumber
\\[-8pt]
\\[-8pt]
\nonumber
&&\qquad\geq
c t^{-1} \Psi(x,t) \cdot\cases{ -C, & \quad $\mbox{if } |x| \leq C \sqrt t$,
\vspace*{2pt}
\cr
\displaystyle\frac{|x|}{\sqrt t}, &\quad  $\mbox{if } |x| \geq C \sqrt t$.}
\end{eqnarray}
\end{lemma}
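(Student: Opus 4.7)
The plan is to estimate the finite-difference operator on $\Psi$ by viewing it as a perturbation of the continuous-time, continuous-space operator $\partial_t \Psi + F(D^2\Psi)$, and to treat the two perturbations by Taylor expansion. Write the decomposition
\[
\Psi(x,t+1) - \Psi(x,t) + F[\Psi(\cdot,t)](x) = \bigl(\partial_t \Psi + F(D^2\Psi)\bigr)(x,t) + E_1 + E_2,
\]
where $E_1 := \Psi(x,t+1) - \Psi(x,t) - \partial_t \Psi(x,t)$ is the time-discretization error (controlled by Taylor's theorem) and $E_2 := F[\Psi(\cdot,t)](x) - F(D^2\Psi(x,t))$ is the spatial consistency error (controlled by \lref{consistency}). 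Setting $u := (1+|x|^2/t)^{1/2}$, direct differentiation gives
\[
\partial_t \Psi = \frac{\Psi}{t}\left(-\beta + \frac{\beta(u^2-1)}{2u}\right),
\]
and, for $x \neq 0$, $D^2\Psi$ has one radial eigenvalue equal to $\frac{\beta\Psi}{tu^2}\bigl(\beta(u^2-1) - 1/u\bigr)$ and $d-1$ tangential eigenvalues equal to $-\frac{\beta\Psi}{tu}$.

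The key bound is a pointwise lower bound on $\partial_t\Psi + F(D^2\Psi)$ matching the right-hand side of~\eqref{e.Psieq}. The eigenvalues of $D^2\Psi$ are all of order $\Psi/t$ uniformly in $u\geq 1$, so the crude bound $|F(M)|\leq C(R)\|M\|$ (which holds because $F$ arises from $\mathcal P \subseteq \mathcal M_R(\Rd)$) gives $|F(D^2\Psi)| \leq C\Psi/t$ everywhere. In the regime $|x| \leq C_*\sqrt{t}$, $u$ is bounded and $|\partial_t\Psi| \leq C\Psi/t$ as well, so the sum is bounded below by $-C\Psi/t$. In the regime $|x|\geq C_*\sqrt{t}$, with $C_*$ chosen large enough, one has $u \geq c|x|/\sqrt{t}$ and the positive term $\partial_t \Psi \geq c\beta u\Psi/t$ dominates $|F(D^2\Psi)|$, producing the desired lower bound $c\Psi|x|/(t\sqrt{t})$.

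Further explicit differentiation yields $|\partial_t^2 \Psi| \leq C\Psi(1+u^2)/t^2$ and, once $t$ is large enough that $\Psi$ and its derivatives vary by at most a bounded factor on $B_1(x)$, $\|D^3\Psi\|_{L^\infty(B_1(x))} \leq C\Psi(x,t)(1+u^2)/t^{3/2}$. Taylor's theorem and \lref{consistency} (taking $\sigma = 1$, say) then combine to give $|E_1|+|E_2| \leq C\Psi(1+u^2)/t^{3/2}$, which is dominated by the lower bound from the previous paragraph for $t\geq C(d,R,\beta)$ large enough, in the range $|x|\leq C t^{3/2}$. For exceptionally large $|x|$, one verifies directly from the explicit formula that $\partial_t^2\Psi > 0$, so $E_1 \geq 0$ can simply be dropped. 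The main obstacle is the lower bound on $\partial_t\Psi + F(D^2\Psi)$ in the transitional regime $|x|\sim \sqrt{t}$, where $\partial_t\Psi$ changes sign and the radial and tangential contributions of $F(D^2\Psi)$ both have to be accounted for with the right constants; this forces careful bookkeeping of the scalar coefficients appearing in the eigenvalues of $D^2\Psi$, but no essentially new idea beyond the calculus already present.
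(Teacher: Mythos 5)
Your proposal follows essentially the same route as the paper: write the scheme as $(\partial_t\Psi + F(D^2\Psi)) + E_1 + E_2$, get the desired lower bound on the continuum part, and absorb the two error terms. Your continuum Step 1 is a cleaner variant of the paper's: where the paper discards unfavorable terms via the matrix inequality $t^{-1}|x|^2 I \geq t^{-1}x\otimes x$, you compute the radial and tangential eigenvalues of $D^2\Psi$ explicitly and observe that they are all $O(\Psi/t)$ uniformly in $u$; both are correct and deliver the same dichotomy. Your observation that $\partial_t^2\Psi \sim \tfrac{\beta^2 u^2}{4t^2}\Psi$ for large $u$ (so that $|\partial_t^2\Psi|\lesssim (1+u^2)\Psi/t^2$ and, for $|x|\gtrsim t^{3/2}$, one should instead use $\partial_t^2\Psi > 0$ to drop $E_1$) is correct and genuinely more careful than what the paper writes, whose stated bound (\ref{e.uglyduck1}) of $O(u)$ growth appears to lose a term $\beta(1-u^{-1}q/2)^2$ from the square of $\partial_t\log\Psi$.

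There is, however, one genuine slip. You claim $\| D^3\Psi\|_{L^\infty(B_1(x))}\leq C\Psi(x,t)(1+u^2)/t^{3/2}$ and then assert that $|E_1|+|E_2|\leq C\Psi(1+u^2)/t^{3/2}$ is dominated by the main-term lower bound $\gtrsim u\Psi/t$ throughout $|x|\leq Ct^{3/2}$. That domination requires $u^2/t^{3/2} \ll u/t$, i.e.\ $u\ll \sqrt t$, i.e.\ $|x|\ll t$, so the absorption as you have written it actually fails in the range $t\lesssim |x|\lesssim t^{3/2}$ — precisely the range where you have not yet switched to the "drop $E_1$" argument. The source of the trouble is that the $(1+u^2)$ factor in the $D^3\Psi$ bound is spurious: writing $D^2\Psi = -\beta t^{-1}\Psi\,M(x,t)$ as in the paper's Step 2, one has $|M|\leq C$, $|DM|\leq C t^{-1/2}$ and $|D\Psi|\leq C\Psi t^{-1/2}$ uniformly in $u$, which gives $\|D^3\Psi\|_{L^\infty(B_1(x))}\leq C\Psi(x,t)/t^{3/2}$ with no $u$-growth. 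With this sharper bound $E_2$ is absorbed for all $x$, and $E_1\lesssim u^2\Psi/t^2 \ll u\Psi/t$ up to $|x|\lesssim t^{3/2}$, so the two regimes you introduce meet up with no gap; but as written the chain of estimates does not close in the intermediate range and needs this correction.
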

\begin{pf}
We split the computation into three steps: first we estimate $\partial
_t \Psi+ F^-(D^2\Psi)$ from below and in the last two steps we show by
approximation that this cannot be too much different from the finite
difference scheme. Throughout, $C$ and $c$ denote positive constants
which depend only on $(d,R,\beta)$ and may vary in each occurrence.

\emph{Step} 1. We estimate $\partial_t \Psi+F(D^2\Psi)$ from below.
We compute
%
\begin{eqnarray}
\label{e.Psit} \partial_t \Psi(x,t)& =& -\beta t^{-1}
\Psi(x,t) \biggl(1 - \biggl(1 + \frac
{|x|^2}{t} \biggr)^{-1/2}
\frac{|x|^2}{2t} \biggr),
\\
\label{e.DPsi} D\Psi(x,t)& = &- t^{-1/2} \Psi(x,t) \biggl( \beta \biggl(
1+\frac{|x|^2}{t} \biggr)^{-1/2} \biggr)\frac{x}{\sqrt{t}}
\end{eqnarray}
and
%
\begin{eqnarray}
\label{e.D2Psi} D^2 \Psi(x,t) &=& -\beta t^{-1} \Psi(x,t)
\biggl( \biggl( 1 + \frac
{|x|^2}{t} \biggr)^{-1/2} I_d -
\beta \biggl( 1 + \frac
{|x|^2}{t} \biggr)^{-1} \frac{x\otimes x}{t}
\nonumber
\\[-8pt]
\\[-8pt]
\nonumber
&&{}- \biggl( 1 + \frac{|x|^2}{t} \biggr)^{-3/2} \frac{x\otimes x}{t}
\biggr).
\end{eqnarray}
Using that
\[
 \frac{|x|^2}{t} I \geq\frac{x\otimes x}{t},
\]
we may discard the first and third terms in parentheses to obtain
\[
 D^2 \Psi(x,t) \leq\beta^2
t^{-1} \biggl( \biggl( 1 + \frac
{|x|^2}{t} \biggr)^{-1}
\frac{x\otimes x}{t} \biggr) \Psi(x,t).
\]
Inserting this expression into the operator $F$ and using \eqref
{e.poshomo} and \eqref{e.sublinear}, we obtain
\begin{eqnarray*}
 F\bigl(D^2\Psi(x,t)\bigr)& \geq&-R^2
\beta^2 t^{-1} \biggl( 1 + \frac{|x|^2}{t}
\biggr)^{-1} \frac{|x|^2}{t} \Psi(x,t) \\
&\geq&-\beta^2
t^{-1} \Psi(x,t).
\end{eqnarray*}
It follows that
\begin{eqnarray*}
\partial_t \Psi(x,t) + F\bigl(D^2\Psi(x,t) \bigr) & \geq&
\beta t^{-1} \Psi(x,t) \biggl( \biggl( 1 + \frac{|x|^2}{t}
\biggr)^{-1/2} \frac{|x|^2}{2t}-\bigl(1 + R^2 \beta\bigr)
\biggr)
\\
& \geq&\beta t^{-1} \Psi(x,t) \biggl( \biggl( 1 + \frac
{|x|^2}{t}
\biggr)^{-1/2} \frac{|x|^2}{2t}-C \biggr)
\\
& \geq& c t^{-1} \Psi(x,t) \cdot\cases{ -C, &\quad  $\mbox{if } |x| \leq C
\sqrt t$, \vspace*{2pt}
\cr
\displaystyle\frac{|x|}{\sqrt t}, &\quad  $\mbox{if } |x| \geq C \sqrt t$.}
\end{eqnarray*}

\emph{Step} 2. In preparation to evaluate $\Psi$ on the finite
difference scheme by comparing to step 1, we estimate $|D^3\Psi|$ and
$\partial_t^2 \Psi$. The claims are: for all $x\in\mathbb{R}^d$ and
$t\geq1$,
%
\begin{equation}
\label{e.uglyduck1} \bigl\llvert \partial^2_t \Psi(x,t) \bigr
\rrvert \leq C t^{-2} \biggl( 1 + \frac
{|x|}{\sqrt t} \biggr) \Psi(x,t)
\end{equation}
and
%
\begin{equation}
\label{e.uglyduck2} \bigl\llvert D^3 \Psi(x,t) \bigr\rrvert \leq C
t^{-1} \Psi(x,t) + Ct^{-3/2} \biggl(1+\frac{|x|}{\sqrt t}
\biggr) \Psi(x,t).
\end{equation}
Differentiating \eqref{e.Psit} yields
\begin{eqnarray*}
 \partial_t^2 \Psi(x,t)& =& \beta
t^{-2} \Psi(x,t) \biggl( 1+ \biggl(1+\frac
{|x|^2}{t}
\biggr)^{-1/2} \frac{|x|^2}{t} + \biggl(1+\frac{|x|^2}{t}
\biggr)^{-3/2} \frac{|x|^4}{t^2}
\\
&&{}+\beta \biggl( 1- \biggl(1+\frac
{|x|^2}{t} \biggr)^{-1/2}
\frac{|x|^2}{t} \biggr) \biggr),
\end{eqnarray*}
from which we get \eqref{e.uglyduck1}. To prove \eqref{e.uglyduck2}, we
must differentiate \eqref{e.D2Psi}. Define $M(x,t)$ to be the matrix in
the parentheses in \eqref{e.D2Psi}, so that
\[
\bigl |D^3\Psi(x,t)\bigr| \leq Ct^{-1} \Psi(x,t)
\bigl|DM(x,t)\bigr| + Ct^{-1} \bigl|D\Psi (x,t)\bigr| \bigl|M(x,t)\bigr|.
\]
It is easy to check that, for $x\in\mathbb{R}^d$ and $t\geq1$,
\[
 \bigl|M(x,t)\bigr| + \bigl|DM(x,t)\bigr| \leq C.
\]
Using this and \eqref{e.DPsi}, we obtain \eqref{e.uglyduck2}.

\emph{Step} 3. We evaluate $\Psi$ on the finite difference scheme.
From \eqref{e.uglyduck1} we have, for every $(x,t) \in\mathbb
{R}^d\times
(1,\infty)$,
\[
 \Psi(x,t+1) - \Psi(x,t) \geq\partial_t\Psi(x,t) -C
t^{-2} \biggl( 1 + \frac{|x|}{\sqrt t} \biggr) \Psi(x,t)
\]
and, by Lemma~\ref{l.consistency} and \eqref{e.uglyduck2},
\[
 F \bigl[ \Psi(\cdot,t) \bigr] (x) \geq F\bigl(D^2
\Psi(x,t) \bigr) - C t^{-1} \Psi (x,t) - Ct^{-3/2} \biggl(1+
\frac{|x|}{\sqrt t} \biggr) \Psi(x,t).
\]
Putting these together, we finally obtain that, for every $x\in\mathbb{R}^d$
and $t\geq C$
\begin{eqnarray*}
 &&\Psi(x,t+1) - \Psi(x,t) + F \bigl[ \Psi(\cdot,t) \bigr]
(x)
\\
&&\qquad \geq\partial_t\Psi(x,t) + F\bigl(D^2 \Psi(x,t) \bigr)
- C t^{-1} \Psi (x,t)- C t^{-3/2} \Psi(x,t) \frac{|x|}{\sqrt t}
\\
&&\qquad \geq c t^{-1} \Psi(x,t) \cdot\cases{ -C, &\quad  $\mbox{if } |x| \leq C
\sqrt t$,
\cr
\displaystyle\frac{|x|}{\sqrt t}, &\quad $ \mbox{if } |x| \geq C \sqrt t$.}
\end{eqnarray*}
This is \eqref{e.Psieq}.
\end{pf}

To prepare for the proof of Theorem~\ref{t.alpha}, we must perform a
second computation to show that, up to a suitable error, $\Phi$ is a
solution of the finite difference equation. In fact, we bend $\Phi$
slightly in order to make it a strict subsolution or supersolution
of \eqref{e.FD} in the region $|x| \lesssim\sqrt{t}$. This computation
is summarized in the following two lemmas.

\begin{lemma}
\label{l.Phitheta}
Let Assumption~\ref{ass.Phi} hold. For each $\theta> 0$, define
\[
 \Phi_\theta(x,t) := \exp \biggl( \frac{1}\theta
t^{-\theta} \biggr) \Phi(x,t).
\]
Then $\Phi_\theta$ satisfies, for some $C(d,R,\theta,\sigma
,a,\alpha)>1$,
%
\begin{eqnarray}\qquad
\label{e.Phitheta}&& \Phi_\theta(x,t+1) - \Phi_\theta(x,t) + F
\bigl[ \Phi_\theta (\cdot,t) \bigr](x)
\nonumber
\\[-8pt]
\\[-8pt]
\nonumber
&&\qquad\leq-t^{-1-\theta} \Phi(x,t) + C t^{-1-\alpha-\sigma/2} \exp \biggl( -
\frac{a|x|^2}{2t} \biggr) \qquad\mbox{in } \mathbb {R}^d\times (0,\infty).
\end{eqnarray}
\end{lemma}

\begin{lemma}
\label{l.Phitheta2}
Let Assumption~\ref{ass.Phi} hold. For each $\theta> 0$, define
\[
 \Phi_{-\theta}(x,t) := \exp \biggl( -\frac{1}\theta
t^{-\theta} \biggr) \Phi(x,t).
\]
Then $\Phi_{-\theta}$ satisfies, for some $C(d,R,\theta,\sigma
,a,\alpha)>1$,
%
\begin{eqnarray}\qquad
\label{e.Phitheta2} &&\Phi_{-\theta} (x,t+1) - \Phi_{-\theta} (x,t) + F
\bigl[ \Phi _{-\theta
}(\cdot,t) \bigr](x)
\nonumber
\\[-8pt]
\\[-8pt]
\nonumber
&&\qquad\geq t^{-1-\theta} \Phi(x,t) - C t^{-1-\alpha
-\sigma/2} \exp \biggl( -
\frac{a|x|^2}{2t} \biggr) \qquad\mbox{in } \mathbb{R}^d \times(0,\infty).
\end{eqnarray}
\end{lemma}

\begin{pf*}{Proof of Lemma~\ref{l.Phitheta}}
Similar to the proof of Lemma~\ref{l.Psi}, we first insert $\Phi
_\theta
$ into the continuum equation, estimate this from above, and then
transfer the estimate by approximation to the finite difference
equation. Throughout, $C$ and $c$ denote positive constants which may
vary in each occurrence and depend only on $(d,R,\theta,\sigma
,a,\alpha)$.

\emph{Step} 1. We evaluate $\partial_t \Phi_\theta+ F(D^2\Phi
_\theta
)$. We compute
%
\begin{equation}
\label{e.ptPhitheta} \partial_t \Phi_\theta(x,t) = \exp \biggl(
\frac{1}\theta t^{-\theta} \biggr) \bigl( -t^{-1-\theta}
\Phi(x,t) + \partial_t \Phi(x,t) \bigr)
\end{equation}
and
\[
 D^2 \Phi_\theta= \exp \biggl(
\frac{1}\theta t^{-\theta} \biggr) D^2 \Phi(x,t).
\]
Using \eqref{e.poshomo}, we find, for every $x\in\mathbb{R}^d$ and
$t> 0$,
%
\begin{eqnarray}
\label{e.Phithetacont}&& \partial_t \Phi_\theta(x,t) + F
\bigl(D^2\Phi_\theta(x,t)\bigr)
\nonumber
\\[-8pt]
\\[-8pt]
\nonumber
&&\qquad= -\exp \biggl(
\frac{1}\theta t^{-\theta} \biggr) t^{-1-\theta} \Phi(x,t) \leq
-t^{-1-\theta} \Phi(x,t).
\end{eqnarray}

\emph{Step} 2. We estimate the quantity $ [ D^2 \Phi_\theta
(\cdot
,t)  ]_{C^{0,\sigma}B_R(x))}$. Assumptions \eqref{e.selfsim}
and \eqref{e.C2sigma} imply that, for every $(x,t) \in\mathbb
{R}^d\times
(0,\infty)$,
\begin{eqnarray*}
 \bigl[ D^2\Phi(\cdot,t) \bigr]_{C^{0,\sigma}(B_{R}( x ))} & =&
t^{-1-\alpha-\sigma/2} \bigl[ D^2\Phi(\cdot,1) \bigr]_{C^{0,\sigma
}(B_R(x/\sqrt{t}))}
\\
& \leq& C t^{-1-\alpha-\sigma/2} \exp \biggl( - \frac{a|x|^2}{t} \biggr)
\end{eqnarray*}
and, therefore, for every $(x,t) \in\mathbb{R}^d\times(1,\infty)$,
%
\begin{eqnarray}
\label{e.D2sPhitheta} \bigl[ D^2\Phi_\theta(\cdot,t)
\bigr]_{C^{0,\sigma}(B_{R}( x ))} & =& \exp \biggl( \frac{1}\theta t^{-\theta}
\biggr) \bigl[ D^2\Phi(\cdot,t) \bigr]_{C^{0,\sigma}(B_{R}( x ))}
\nonumber
\\[-8pt]
\\[-8pt]
\nonumber
& \leq& C t^{-1-\alpha-\sigma/2}\exp \biggl( - \frac{a|x|^2}{t} \biggr).
\end{eqnarray}
%

\emph{Step} 3. We estimate the quantity $\llvert  \partial^2_t \Phi
_\theta\rrvert  $. The claim is
%
\begin{equation}
\label{e.partl2clm} \partial^2_t \Phi_\theta(x,t)
\leq C t^{-2-\alpha} \exp \biggl( -\frac
{a|x|^2}{t} \biggr).
\end{equation}
%
It is convenient to use self-similarity \eqref{e.selfsim} to relate the
time differences to spatial ones, in view of assumption \eqref
{e.decay}. First, differentiating the self-similarity relation yields
\[
 \partial_t \Phi(x,t) = -\tfrac{1}2
t^{-1} x\cdot D\Phi(x,t) - \alpha t^{-1} \Phi(x,t)
\]
and
\begin{eqnarray*}
 \partial_t^2 \Phi(x,t) &=& \alpha(\alpha+1)
t^{-2} \Phi(x,t) + \bigl( \alpha+ \tfrac{3}4 \bigr)
t^{-2} x\cdot D\Phi(x,t)\\
&&{} + \tfrac{1}4 t^{-2} x\cdot
D^2\Phi(x,t)x.
\end{eqnarray*}
Using \eqref{e.selfsim} again and then \eqref{e.decay}, we estimate
\begin{eqnarray*}
\bigl\llvert \partial_t \Phi(x,t) \bigr\rrvert & \leq& C
t^{-1-\alpha} \biggl( \Phi \biggl( \frac{x}{\sqrt t}, 1 \biggr) +
\frac{|x|}{\sqrt t} \biggl\llvert D\Phi \biggl(\frac{x}{\sqrt t},1 \biggr) \biggr
\rrvert \biggr)
\\
& \leq& Ct^{-1-\alpha} \biggl( 1+\frac{|x|}{\sqrt t} \biggr) \exp \biggl( -
\frac{a |x|^2}{t} \biggr)
\\
& \leq& Ct^{-1-\alpha} \exp \biggl( - \frac{a|x|^2}{2 t} \biggr)
\end{eqnarray*}
and
\begin{eqnarray*}
\bigl\llvert \partial_t^2 \Phi(x,t) \bigr\rrvert & \leq&
C t^{-2-\alpha} \biggl( \Phi \biggl( \frac{x}{\sqrt t}, 1 \biggr) +
\frac{|x|}{\sqrt t} \biggl\llvert D\Phi \biggl(\frac{x}{\sqrt t},1 \biggr) \biggr
\rrvert + \frac{|x|^2}{t} \biggl\llvert D^2\Phi \biggl(
\frac{x}{\sqrt t},1 \biggr) \biggr\rrvert \biggr)
\\
& \leq& Ct^{-2-\alpha} \biggl( 1+\frac{|x|^2}{t} \biggr) \exp \biggl( -
\frac{a |x|^2}{t} \biggr)
\\
& \leq& Ct^{-2-\alpha} \exp \biggl( - \frac{a|x|^2}{2 t} \biggr).
\end{eqnarray*}
In view of the fact that
\[
 \partial^2_t \Phi_\theta(x,t) =
\exp \biggl( \frac{1}\theta t^{-\theta} \biggr) \bigl(
\partial^2_t \Phi(x,t) -2t^{-1-\theta}
\partial_t \Phi (x,t) + (1+\theta)t^{-2-\theta} \Phi(x,t) \bigr),
\]
we obtain
\begin{eqnarray*}
\partial^2_t \Phi_\theta(x,t) & \leq & C \bigl(
\bigl\llvert \partial^2_t \Phi (x,t) \bigr\rrvert +
t^{-1-\theta} \bigl\llvert \partial_t \Phi(x,t) \bigr\rrvert +
t^{-2-\theta} \Phi(x,t) \bigr)
\\
& \leq& C t^{-2-\alpha} \exp \biggl( -\frac{a|x|^2}{t} \biggr).
\end{eqnarray*}
This is \eqref{e.partl2clm}.

\emph{Step} 4. We complete the proof using Lemma~\ref{l.consistency}
combined with \eqref{e.Phithetacont}, \eqref{e.D2sPhitheta}
and \eqref
{e.partl2clm}. We have
\begin{eqnarray*}
&& \Phi_\theta(x,t+1) - \Phi_\theta(x,t) + F \bigl[
\Phi _\theta (\cdot,t) \bigr](x)
\\
& &\qquad\leq\partial_t \Phi(x,t) + F \bigl( D^2\Phi(x,t)
\bigr) - Ct^{-2-\alpha} \exp \biggl( -\frac{a|x|^2}{2t} \biggr)
\\
&&\qquad\quad{} - C t^{-1-\alpha-\sigma/2} \exp \biggl(-\frac{a|x|^2}{t} \biggr)
\\
& &\qquad \leq-t^{-1-\theta} \Phi(x,t) + C t^{-1-\alpha-\sigma/2} \exp \biggl( -
\frac{a|x|^2}{2t} \biggr),
\end{eqnarray*}
as desired.
\end{pf*}

\begin{pf*}{Proof of Lemma~\ref{l.Phitheta2}}
The proof is essentially the same as that of Lem\-ma~\ref{l.Phitheta},
with only minor modifications coming from the change in sign of $\theta
$ in the definition of $\Phi_{-\theta}$. The details are omitted.
\end{pf*}

Using the above Lemmas \ref{l.Phitheta} and
\ref{l.Phitheta2} together with
Assumptions \ref{ass.Phi} and \ref{ass.w}, we now give the proof of
Theorem~\ref{t.alpha}.

\begin{pf*}{Proof of \eqref{e.theorem1-LB} \normalfont{(lower bound)}}
The proof is based on the fact that after a long time, and for
appropriate choices of the parameters, the function
%
\begin{equation}
\label{e.zeta} \zeta(x,t) := \Phi_\theta(x,t) - s\Psi(x,t)
\end{equation}
is a subsolution of the finite difference equation. Here, $\Psi$
and $\Phi_\theta$ are as in Lemmas~\ref{l.Psi} and \ref{l.Phitheta},
respectively, and $s>1$ is a large constant to be selected below. Once
we show this, the lower bound \eqref{e.theorem1-LB} follows easily from
Lemma~\ref{l.monotonicity}.

Throughout the proof, $C$ and $c$ denote positive constants
which may vary in each occurrence and depend only on $(d,R,\sigma
,a,\alpha,\delta)$.
When constants depend on other parameters, we will denote it in the notation,
for example, a constant depending on the above parameters \textit{and}
on $r$ will be denoted $C(r)$.

We first fix the parameters (with the exception of $s$). With $\alpha
>0$ and $\sigma>0$ as in Assumption~\ref{ass.Phi}, we first select
$\theta>0$ in the definition of $\Phi_\theta$ such that
%
\begin{equation}
\label{e.theta} 0 < \theta< \frac{\sigma}2.
\end{equation}
We then take $\beta> 0$ in the definition of $\Psi$ to satisfy
%
\begin{equation}
\label{e.beta} \alpha+\theta< \beta< \alpha+ \frac{\sigma}2.
\end{equation}

\emph{Step} 1. We show that there exists $T(s)>1$ such that $\zeta$
defined as in \eqref{e.zeta} satisfies, for every $x\in\mathbb{R}^d$ and
$t\geq T$,
%
\begin{equation}
\label{e.zetasub} \zeta(x,t+1) - \zeta(x,t) + F \bigl[ \zeta(\cdot,t) \bigr] (x)
\leq0.
\end{equation}
According to \eqref{e.poshomo} and \eqref{e.sublinear}, for every $t>0$,
\[
 F \bigl[ \zeta(\cdot,t) \bigr](x) \leq F \bigl[
\Phi_\theta(\cdot,t) \bigr](x) - sF \bigl[ \Psi(\cdot,t) \bigr](x).
\]
Then according to \eqref{e.Psieq}, \eqref{e.Phitheta}
and Assumption~\ref{ass.Phi},
%
\begin{eqnarray}
\label{e.zetaeq}&& \zeta(x,t+1) - \zeta(x,t) + F \bigl[ \zeta(\cdot,t)
\bigr] (x)\nonumber
\\
&&\qquad \leq-t^{-1-\alpha-\theta} \Phi \biggl( \frac{x}{\sqrt t},1 \biggr) +
Ct^{-1-\alpha-\sigma/2} \exp \biggl( -\frac{a|x|^2}{2t} \biggr)
\\
&&\qquad\quad{}
- cs t^{-1-\beta} \Psi \biggl( \frac{x}{\sqrt t},1 \biggr) \cdot
\cases{ -C, &\quad $ \mbox{if } |x| \leq C \sqrt{t},$
\vspace*{2pt}\cr
\displaystyle\frac{|x|} {\sqrt t}, &\quad $ \mbox{if }
|x| \geq C\sqrt{t}$.}\nonumber
\end{eqnarray}
Now we simply note that if $t$ is large enough, then by \eqref{e.theta}
and \eqref{e.beta} we have, for every $|y| \leq C$,
%
\begin{equation}\qquad
\label{e.slowdecaywins} t^{-1-\alpha-\theta} \Phi(y,1) \geq C t^{-1-\alpha-\sigma/2} \exp \biggl(-
\frac{a|y|^2}{2} \biggr) + Cst^{-1-\beta} \Psi (y,1 ),
\end{equation}
while on the other hand, for every $|y| \geq C$, condition \eqref
{e.beta} and the fact that $\Psi(\cdot,1)$ has fatter tails than a
Gaussian ensures that, for large enough $t$,
%
\begin{equation}
\label{e.fattailswin} |y| t^{-1-\beta} \Psi( y,1) \geq Ct^{-1-\alpha-\sigma/2} \exp
\biggl( -\frac{a|y|^2}{2} \biggr).
\end{equation}
These inequalities imply that, for large enough $t$, the right-hand
side of \eqref{e.zetaeq} is nonpositive in $\mathbb{R}^d$, as
claimed. In terms
of $s$, we see that it
suffices to take
%
\begin{equation}
\label{e.Ts} T(s) := C s^{1/(\beta-\alpha-\theta)},
\end{equation}
where $C$, according to our convention, is a constant that depends
on $(d,R,\sigma,a,\alpha)$ only (in particular,
it does not depend on $s$).

\emph{Step} 2. We complete the proof of the lower bound.
Denote $N:=\lceil T \rceil$, and observe that, in view of \eqref{e.Ts},
\begin{eqnarray*}
 \bigl\{ y\in\mathbb{R}^d: \zeta(y,N) \geq0 \bigr\} &
= &\bigl\{ y\in\mathbb {R}^d: \Phi _\theta(y,N) \geq s
\Psi(y,N) \bigr\}
\\
& \subseteq& \biggl\{ y\in\mathbb{R}^d: N^{-\alpha} \exp \biggl(
- \frac
{a|y|^2}{N} \biggr) \geq cs N^{-\beta} \exp \biggl( -
\frac{\beta
|y|}{\sqrt N} \biggr) \biggr\}
\\
& \subseteq& \biggl\{ y\in\mathbb{R}^d: \exp \biggl( -
\frac
{a|y|^2}{N} \biggr) \geq c N^{-\theta} \exp \biggl( -
\frac{\beta|y|}{\sqrt N} \biggr) \biggr\}
\\
& \subseteq& \bigl\{ y\in\mathbb{R}^d: |y| \leq C \sqrt{N \log N}
\bigr\}.
\end{eqnarray*}
(In the second inclusion, we used that $s\geq N^{\beta-\alpha-\theta}$.)

Take $r$ as equal to $C$ of the last display (recall that $C$ does not
depend on $s$). Let $N_0(r)$ be as in part (i) of
Assumption~\ref{ass.w}, and choose $s$ large enough so that
$N>N_0(r)$. Note that, given the function $N_0(\cdot)$,
$s=s(d,R,\sigma,a,\alpha)$, and thus $N=N(d,R,\sigma,a,\alpha
,\delta)$.
Then
\[
 \inf \bigl\{ w(y,N) : y \in\mathbb{R}^d, \zeta(y,N)
\geq0 \bigr\} > 0.
\]
Since $\sup_{\mathbb{R}^d} \zeta(\cdot,N) \leq C$ and $w\geq0$ in
$\mathbb{R}^d$,
we obtain
\[
 \zeta(\cdot,N) \leq C w(\cdot,N) \qquad\mbox{in } \mathbb{R}^d.
\]
According to \eqref{e.zetasub} and Lemma~\ref{l.monotonicity},
\[
 \zeta(x,n) \leq C w(x,n) \qquad\mbox{for every } x\in
\mathbb{R}^d, n\geq N.
\]
The lower bound in \eqref{e.theorem1-LB} now follows, since
\begin{eqnarray*}
 &&\inf_{r>0} \liminf_{t\to\infty}
\inf_{|x| \leq r\sqrt{t} } \frac
{\zeta
(x,t)}{\Phi(x,t)} \\
&&\qquad \geq\inf_{r>0}
\liminf_{t\to\infty} \inf_{|x|
\leq r\sqrt{t} } \frac{\Phi_\theta(x,t)}{\Phi(x,t)}
- s\sup_{r>0} \limsup_{t\to\infty} \sup
_{|x| \leq r\sqrt{t} } \frac{\Psi
(x,t)}{\Phi
(x,t)}
\\
& &\qquad\geq\liminf_{t\to\infty} \exp \biggl( \frac{1}\theta
t^{-\theta} \biggr)
\\
&&\qquad\quad{} - s\sup_{r>0} \limsup_{t\to\infty}
t^{\alpha-\beta} \Bigl( \sup_{|y| \leq r } \Psi ( y,1 ) \Bigr) \Bigl(
\inf_{|y| \leq
r } \Phi ( y,1 ) \Bigr)^{-1}
\\
&&\qquad = 1.
\end{eqnarray*}
%
Note that we used that $\beta> \alpha$, from \eqref{e.beta}.
\end{pf*}

\begin{pf*}{Proof of \eqref{e.theorem1-UB} \normalfont{(upper bound)}}
The proof is similar to (and even somewhat easier than) that of the
lower bound. Instead of \eqref{e.zeta}, we use the function
%
\begin{equation}
\label{e.xi} \xi(x,t) := \Phi_{-\theta}(x,t) + \Psi(x,t),
\end{equation}
where $\Psi$ and $\Phi_{-\theta}$ are as in Lemmas \ref{l.Psi}
and \ref
{l.Phitheta2}, respectively. The goal is to show that $\xi$ is a
supersolution of the finite difference equation after a large time.
Then we apply Lemma~\ref{l.monotonicity} to conclude, as above. The
choices of the parameters $\theta$ and
$\beta$ as well as the convention for the constants $C$ and $c$ are the
same as in the proof of the lower bound.

\emph{Step} 1. We show that there exists $T>1$ such that $\xi$ satisfies,
for every $x\in\mathbb{R}^d$ and $t\geq T$,
%
\begin{equation}
\label{e.xisuper} \xi(x,t+1) - \xi(x,t) + F \bigl[ \xi(\cdot,t) \bigr] (x) \geq0.
\end{equation}
Using \eqref{e.poshomo} and \eqref{e.sublinear}, we find that, for
every $t>0$,
\[
 F \bigl[ \xi(\cdot,t) \bigr](x) \geq F \bigl[ \Phi_{-\theta}
(\cdot,t) \bigr](x) + F \bigl[ \Psi(\cdot,t) \bigr](x).
\]
By \eqref{e.Psieq} and \eqref{e.Phitheta2},
\begin{eqnarray*}
&& \xi(x,t+1) - \xi(x,t) + F \bigl[ \xi(\cdot,t) \bigr] (x)\nonumber
\\
&&\qquad \geq t^{-1-\alpha-\theta} \Phi \biggl( \frac{x}{\sqrt t},1 \biggr) -
Ct^{-1-\alpha-\sigma/2} \exp \biggl( -\frac{a|x|^2}{2t} \biggr)
\\
&&\qquad\quad{}
+ c t^{-1-\beta} \Psi \biggl( \frac{x}{\sqrt t},1 \biggr) \cdot
\cases{ - C, &\quad $\mbox{if } |x| \leq C \sqrt{t}$,
\vspace*{2pt}\cr
\displaystyle\frac{|x|} {\sqrt t}, & \quad$\mbox{if }
|x| \geq C\sqrt{t}$.}\nonumber
\end{eqnarray*}
By the choice of parameters, that is,  \eqref{e.theta} and \eqref
{e.beta}, we have for sufficiently large $t$ that \eqref
{e.slowdecaywins} holds for every $|y| \leq C$ and \eqref
{e.fattailswin} holds for every $|y| \geq C$. Together these yield the claim.

\emph{Step} 2. We complete the proof of the upper bound.
Select $T$, $s$, $r$ as in step 1. By the definition of $\Psi$ and
Assumption~\ref{ass.w}, we have
\[
 w(x,N) \leq C \exp \biggl( - \beta\frac{|x|}{\sqrt N} \biggr) \leq
C \Psi (x,N) \leq C \xi(x,N).
\]
[As $T$ depends only on $(d,R,\sigma,a,\alpha)$, the
constant $C$, which depends on $T$, satisfies our convention for dependence
on parameters.]
By \eqref{e.xisuper} and Lemma~\ref{l.monotonicity},
\[
 w(x,n) \leq C \xi(x,n)\qquad \mbox{for every } x\in\mathbb{R}^d,
n\geq N.
\]
We thus conclude the proof of the upper bound by observing that
%
\begin{eqnarray*}
 &&\sup_{r>0} \limsup_{t\to\infty}
\sup_{|x| \leq r\sqrt{t} } \frac
{\xi
(x,t)}{\Phi(x,t)}\\
&&\qquad \leq\sup_{r>0}
\limsup_{t\to\infty} \sup_{|x|
\leq r\sqrt{t} } \frac{\Phi_\theta(x,t)}{\Phi(x,t)}
+ \sup_{r>0} \limsup_{t\to\infty} \sup
_{|x| \leq r\sqrt{t} } \frac{\Psi(x,t)}{\Phi
(x,t)}
\\
& &\qquad\leq\limsup_{t\to\infty} \exp \biggl( -\frac{1}\theta
t^{-\theta} \biggr)
\\
& &\qquad\quad{}+ \sup_{r>0} \limsup_{t\to\infty}
t^{\alpha-\beta} \Bigl( \sup_{|y| \leq r } \Psi ( y,1 ) \Bigr) \Bigl(
\inf_{|y|
\leq r } \Phi ( y,1 ) \Bigr)^{-1}
\\
&&\qquad = 1.
\end{eqnarray*}
We note once again that we used that $\beta> \alpha$, by \eqref{e.beta}.
\end{pf*}

The proof of Theorem~\ref{t.alpha} is now complete.

\section{Existence of self-similar profiles}
\label{s.asscheck}

In this section, we show that Assumptions \ref{ass.Phi} and \ref{ass.w}
hold for a wide class of examples and we indicate methods for computing
(or at least estimating) $\alpha$.

\subsection{A nonlinear principal eigenvalue problem}

In our search for $\Phi$, we may use the self-similarity relation to
reformulate the parabolic equation as an elliptic one. By
differentiating \eqref{e.selfsim}, we have
\[
 \partial_t \Phi(x,t) = -\tfrac{1}2
t^{-1} x\cdot D\Phi(x,t) - \alpha t^{-1} \Phi(x,t)
\]
and substituting this into \eqref{e.Phieq} yields
\[
 F\bigl(D^2\Phi(x,t) \bigr) -\tfrac{1}2
t^{-1} x \cdot D\Phi(x,t) = \alpha t^{-1} \Phi(x,t) \qquad\mbox{in }
\mathbb{R}^d\times(0,\infty).
\]
Using \eqref{e.poshomo} and \eqref{e.selfsim} again to change to the
variable $y = x /\sqrt{t}$, we may eliminate the time variable. We get
%
\begin{equation}
\label{e.Phieqellip} F\bigl(D^2\Phi(y,1)\bigr) - \tfrac{1}2 y \cdot
D\Phi(y,1) = \alpha\Phi(y,1)\qquad \mbox{in } \mathbb{R}^d.
\end{equation}
This is a principal eigenvalue problem: the unknowns $\alpha$ and
$\Phi
(\cdot,1)$ are the principal eigenpair. If we can solve it, then we may
recover the full function $\Phi$ via the self-similarity relation.
While the domain $\mathbb{R}^d$ is unbounded, the drift term makes the problem
well-posed in the uniformly elliptic setting (see \cite{AT}). We
investigate this in more detail in the next subsection.

\subsection{Uniformly elliptic martingales: Proof of
Corollary \texorpdfstring{\protect\ref{cor-elliptic}}{1.1}}
\label{sec-4.2}
In this subsection, we verify that Assumptions \ref{ass.Phi} and \ref
{ass.w} hold for both $F^-$ and $F^+$ in the uniformly elliptic case,
that is, under the additional hypothesis
%
\begin{equation}
\label{e.UEass} \mathcal P \subseteq\mathcal E_{\lambda,R}\bigl(
\mathbb{R}^d\bigr) \qquad\mbox{for some $\lambda> 0, R\geq\sqrt{2d}$.}
\end{equation}
We use \cite{AT}, Theorem~1.1 and the Evans--Krylov theorem \cite{CC},
Theorem~6.6, to show that Assumption~\ref{ass.Phi} holds, while we
verify Assumption~\ref{ass.w}(i) by a pathwise construction.

With $F=F^+$ or $F=F^-$, the results of \cite{AT} imply the existence
of $\alpha> 0$ and $\Phi\in C(\mathbb{R}^d\times(0,\infty))$
satisfying \eqref
{e.Phieq} in the weak viscosity sense as well as \eqref{e.positive}
and \eqref{e.selfsim}. It is also proved that $\Phi$ is unique,
provided we impose the normalization $\Phi(0,1) = 1$, and that the
function $\Phi(\cdot,1)$ satisfies \eqref{e.Phieqellip} and, for some
constants $K_0(d,\lambda)>1$ and $a(d,\lambda)>0$,
%
\begin{equation}
\label{e.wkdecay} \bigl\llvert \Phi(x,1) \bigr\rrvert \leq K_0 \exp
\bigl( -2a|x|^2 \bigr).
\end{equation}
To check Assumption~\ref{ass.Phi}, we have left to show
that the Evans--Krylov theorem and \eqref{e.wkdecay} imply
the stronger bound \eqref{e.decay}. This is handled in the following lemma.

\begin{lemma}
Let $F$, $\lambda$, $\alpha$, $\Phi$, $K_0$ and $a>0$ be as above. Then
there exist $\sigma(d,\lambda)\in(0,1]$ and $C(d,\lambda,\alpha
,K_0,a)>0$ such that, for every $x\in\mathbb{R}^d$,
\[
 \bigl\llVert \Phi(\cdot,1) \bigr\rrVert _{C^{2,\sigma}(B_1(x))} \leq C
\exp \bigl( -a|x|^2 \bigr).
\]
\end{lemma}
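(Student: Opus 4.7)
The plan is to upgrade the $L^\infty$ Gaussian decay~\eqref{e.wkdecay} to a $C^{2,\sigma}$ Gaussian decay by localizing the elliptic equation~\eqref{e.Phieqellip} on balls of unit radius and applying standard interior regularity estimates for uniformly elliptic, concave (or convex) fully nonlinear equations. The starting point is the observation that $u := \Phi(\cdot,1)$ solves
\begin{equation*}
F(D^2 u(y)) = \tfrac12 y\cdot Du(y) + \alpha u(y) \quad \text{in } \Rd,
\end{equation*}
which, restricted to $B_2(x)$ for a fixed $x\in\Rd$, is a uniformly elliptic equation (with ellipticity constants $\lambda,1$) whose drift coefficient $b(y) = -y/2$ is bounded by $|b|_{L^\infty(B_2(x))} \leq \tfrac12(|x|+2)$ and whose zeroth-order coefficient is the constant $\alpha$. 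The operator $F$ is either concave ($F=F^-$) or convex ($F=F^+$) in $M$, so Evans--Krylov-type estimates are available.

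The first step is to apply the Krylov--Safonov H\"older estimate to get $u\in C^{\gamma}(B_{3/2}(x))$, then the interior $C^{1,\gamma}$ estimate for uniformly elliptic fully nonlinear equations (see \cite{CC}), to obtain a bound of the form
\begin{equation*}
\|u\|_{C^{1,\gamma}(B_{3/2}(x))} \leq C(1+|x|)^{p_1} \|u\|_{L^\infty(B_2(x))}
\end{equation*}
for some $\gamma\in(0,1]$ and $p_1$ depending only on $d,\lambda$. Rewriting the PDE as $F(D^2 u) = g$ with $g(y) := \tfrac12 y\cdot Du + \alpha u$, we now have $g \in C^{\gamma}(B_{3/2}(x))$ with a norm bounded by $C(1+|x|)^{p_2}\|u\|_{L^\infty(B_2(x))}$. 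The Evans--Krylov theorem (\cite[Theorem 6.6]{CC}) applied to this equation yields $\sigma=\sigma(d,\lambda)\in(0,1]$ and
\begin{equation*}
\|u\|_{C^{2,\sigma}(B_1(x))} \leq C(1+|x|)^{p} \|u\|_{L^\infty(B_2(x))},
\end{equation*}
with $p,C$ depending only on $d,\lambda,\alpha$. The key point, which will need to be verified by inspecting the scaling in the proofs of the cited theorems, is that all these constants grow at most \emph{polynomially} in the size of the drift, hence polynomially in $|x|$.

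To conclude, I would combine this interior bound with~\eqref{e.wkdecay}: for $|x|\geq 3$,
\begin{equation*}
\|u\|_{L^\infty(B_2(x))} \leq K_0 \exp\bigl(-2a(|x|-2)^2\bigr) \leq C(a,K_0) \exp\bigl(-\tfrac{3a}{2}|x|^2\bigr),
\end{equation*}
while for $|x|\leq 3$ we simply use the global bound $\|u\|_{L^\infty(\Rd)}\leq K_0$. The polynomial prefactor $(1+|x|)^p$ is then absorbed into a fraction of the Gaussian decay, since $(1+|x|)^p\exp(-\tfrac{3a}{2}|x|^2)\leq C(d,\lambda,\alpha,a,K_0)\exp(-a|x|^2)$, yielding~\eqref{e.decay}.

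The only step with genuine content is verifying that the dependence of the constants in the Krylov--Safonov, $C^{1,\gamma}$, and Evans--Krylov estimates on $\|b\|_{L^\infty}$ is polynomial rather than exponential; this is where the full weight of the Gaussian decay is needed. Everything else is bookkeeping, and the value of $\sigma$ produced depends only on $d$ and $\lambda$, as promised in the statement.
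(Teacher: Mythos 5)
Your plan---apply interior estimates directly on unit balls $B_2(x)$ and argue that the constants in Krylov--Safonov, the $C^{1,\gamma}$ estimate, and Evans--Krylov depend only polynomially on $\|b\|_{L^\infty}$---has a genuine gap, and you flag it yourself without filling it. Polynomial dependence of those constants on the drift bound is not how the cited theorems are stated, and it is not true in general: for example, the Harnack constant for $u'' + bu' = 0$ on $(-1,1)$ already grows like $e^{b/2}$ (take $u(x) = 1 + e^{-bx}$). In addition, the H\"older exponent produced by Krylov--Safonov type estimates can itself deteriorate as $\|b\|_{L^\infty}$ grows, so you would not even get a fixed $\sigma(d,\lambda)$ out of that route; the lemma requires $\sigma$ independent of $x$.

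The paper's proof circumvents the issue with a rescaling you are missing. Define $\varphi_r(y):=\varphi(ry)$ with $r := \tfrac14(1+|x_0|)^{-1}$ and $y_0 := x_0/r$, so that $\varphi_r$ solves
\begin{equation*}
F(D^2\varphi_r) - \tfrac12 r^2\, y\cdot D\varphi_r = \alpha r^2 \varphi_r ,
\end{equation*}
whose first-order coefficient is uniformly bounded (by an absolute constant) on $B_8(y_0)$ precisely because $|y_0|\leq r^{-2}$. Now the interior $C^{1,\sigma}$ estimate and Evans--Krylov apply on fixed-size balls with constants and a H\"older exponent $\sigma$ depending only on $(d,\lambda)$; the growth in $|x_0|$ enters only through the clean scaling identity $\|\varphi\|_{C^{2,\sigma}(B_{2r}(x_0))} \leq r^{-2-\sigma}\|\varphi_r\|_{C^{2,\sigma}(B_2(y_0))}$, producing a factor $(1+|x_0|)^{2+\sigma}$ which the Gaussian decay~\eqref{e.wkdecay} easily absorbs. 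So the polynomial prefactor comes from the scaling, not from a claimed polynomial dependence of the regularity constants on the drift; that substitution is the content you need to add.
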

\begin{pf}
Throughout the argument, $C$ denotes a positive constant depending only
on $(d,\lambda,\alpha,K_0,a)$ which may vary in each occurrence. As
mentioned above, the function $\varphi(x):= \Phi(x,1)$ satisfies the equation
%
\begin{equation}
\label{e.eig} F\bigl(D^2 \varphi\bigr) - \tfrac{1}2 x\cdot D
\varphi= \alpha\varphi\qquad\mbox {in } \mathbb{R}^d.
\end{equation}
For the moment, we must interpret \eqref{e.eig} in the weak viscosity
sense (as defined in~\cite{CC}), although we will see shortly that
$\varphi$ is $C^2$ and, therefore, \eqref{e.eig} can be understood in
the classical sense.

Equation \eqref{e.eig} possesses a local length scale arising from the
competition between the gradient term and the diffusive term. Since the
gradient term is stronger for larger $|x|$, in order to apply local
elliptic regularity estimates to $\varphi$ near $x\in\mathbb{R}^d$,
it is
natural to rescale the equation in some way which depends on $|x|$. We
perform the rescaling by introducing the variable $y=x/r$ and denoting
$\varphi_r(y):= \varphi(x)$, where $0< r \leq1$. In terms
of $\varphi
_r$, equation \eqref{e.eig} takes the form
%
\begin{equation}
\label{e.eigr} F\bigl(D^2 \varphi_r\bigr) -
\tfrac{1}2r^2 y\cdot D\varphi= \alpha r^2
\varphi_r \qquad\mbox{in } \mathbb{R}^d.
\end{equation}
Notice that the first-order coefficient is uniformly bounded and smooth
for every $|y| \lesssim r^{-2}$. The interior gradient H\"older
estimate for uniformly elliptic equations therefore implies that
$\varphi_r \in C^{1,\sigma}(B_4(y_0))$ for some $\sigma(d,\lambda
)\in
(0,1]$ and, for every $0< r\leq1$ and $y_0\in\mathbb{R}^d$ with
$|y_0| \leq r^{-2}$,
%
\begin{equation}
\label{e.varphc1a} \llVert \varphi_r \rrVert _{C^{1,\sigma}(B_4(y_0))} \leq C
\bigl( 1+\alpha r^2 \bigr) \llVert \varphi_r \rrVert
_{L^\infty(B_8(y_0))}.
\end{equation}
The standard reference for this estimate for equations with no gradient
dependence is \cite{CC}, Theorem~8.3, and the argument there can be
adapted in
a straightforward manner to handle equations with gradient dependence
and, in particular, \eqref{e.eigr}. Alternatively, we refer to \cite{T}
for a statement with hypotheses covering our case.

We may now re-express \eqref{e.eigr} as
\[
F\bigl(D^2 \varphi_r\bigr) = f \qquad\mbox{in }
\mathbb{R}^d,
\]
where, in view of \eqref{e.varphc1a}, the function $f(y):= \frac{1}2r^2
y\cdot D\varphi(y) + \alpha r^2 \varphi_r(y)$ satisfies, for each $0<
r\leq1$ and $|y_0| \leq r^{-2}$,
\[
 \llVert f \rrVert _{C^{\sigma}(B_4(y_0))} \leq C \bigl( 1+\alpha
r^2 \bigr) \llVert \varphi_r \rrVert _{L^\infty(B_8(y_0))}.
\]
As $F$ is either convex or concave, the Evans--Krylov theorem
(cf. \cite{CC}, Theorem~6.6) yields [after redefining $\sigma
(d,\lambda
)$ to be smaller, if necessary] that $ \varphi_r \in C^{2,\alpha
}(B_2(y_0))$ and gives the estimate
%
\begin{eqnarray}
\label{e.c2sigr} \llVert \varphi_r \rrVert _{C^{2,\sigma}(B_2(y_0))} & \leq& C
\bigl( \| \varphi_r\|_{L^\infty(B_4(y_0))} + \llVert f \rrVert
_{C^{\sigma
}(B_4(y_0))} \bigr)
\nonumber
\\[-8pt]
\\[-8pt]
\nonumber
& \leq& C \bigl( 1+\alpha r^2 \bigr) \llVert \varphi_r
\rrVert _{L^\infty(B_8(y_0))}.
\end{eqnarray}
We now reverse the scaling to express \eqref{e.c2sigr} in terms
of $\varphi$. For a fixed $x_0\in\mathbb{R}^d$, set $r:= \frac{1}4(1+|x_0|)^{-1}$
and $y_0:= x_0 / r$. Note that $|y_0| < r^{-2}$ and that we have
\begin{eqnarray*}
 \llVert \varphi\rrVert _{L^\infty(B_{2r}(x_0))}&=& \llVert
\varphi_r \rrVert _{L^\infty(B_{2}(y_0))}\quad \mbox{and}\\
 \llVert \varphi \rrVert
_{C^{2,\sigma}(B_{2r}(x_0))} &\leq& r^{-2-\sigma} \llVert \varphi _r \rrVert
_{C^{2,\sigma}(B_{2}(y_0))}.
\end{eqnarray*}
We therefore obtain from \eqref{e.c2sigr} that
\[
 \llVert \varphi\rrVert _{C^{2,\sigma}(B_{2r}(x_0))} \leq Cr^{-2-\sigma
}
\bigl( 1+\alpha r^2 \bigr) \llVert \varphi\rrVert _{L^\infty
(B_{2r}(x_0))}.
\]
According to \cite{AT}, the exponent $\alpha$ depends only on
$(d,\lambda)$. Applying \eqref{e.wkdecay} and using the fact that
$B_{8r}(x_0) \subseteq\mathbb{R}^d\setminus B_{3|x_0|/4}$ for $|x_0|
> 4$, we obtain
\[
 \llVert \varphi\rrVert _{C^{2,\sigma}(B_{2r}(x_0))} \leq C
\bigl(1+|x_0| \bigr) ^{2+\sigma} \exp \bigl( -2a \cdot\tfrac{9}{16}
|x_0|^2 \bigr) \leq C \exp \bigl( -\tfrac{17}{16}
a |x_0|^2 \bigr). 
\]
The previous estimate implies
\[
 \llVert \varphi\rrVert _{C^{2,\sigma}(B_{1}(x_0))} \leq C \exp \bigl( -a
|x_0|^2 \bigr). 
\]
\upqed\end{pf}

Checking Assumption~\ref{ass.w} in the uniformly elliptic case is
straightforward. As we previously mentioned, the upper bound \eqref
{e.asstails} follows from Azuma's inequality. To check \eqref
{e.assbulk}, it is enough to consider a simple random walk. That is, we
take $\{\Delta_i\}_{i\geq0}$ as a sequence of i.i.d. random variables
so that
$\P(\Delta_0=\pm\sqrt{2d} e_i)=1/2d$ for $e_i$ the standard unit
vectors in $\R^d$. Let $y_x$ denote an element of $\sqrt{2d}\Z^d$ with
minimal norm $|x-y_x|$. Then, with $|y_x|_1$ denoting the $\ell^1$ norm
of $y_x/\sqrt{2d}$, we have, for every $n>|y_x|_1$,
\[
\P\bigl(|M_n-y_x|\leq\sqrt{2}d\bigr)\geq 
 \biggl(
\frac{1}{2d} \biggr)^n.
\]
This immediately implies \eqref{e.assbulk}. This completes the proof of
Corollary~\ref{cor-elliptic}.


%

\subsection{Estimating the exponent $\alpha$:
The proof of \texorpdfstring{\protect\eqref{e.alphabounds}}{(1.10)} and 
\texorpdfstring{\protect\eqref{e.alphabounds2}}{(1.11)}}

As we have seen, finding the exponent $\alpha$ and self-similar profile
$\Phi$ is equivalent to solving a nonlinear eigenvalue problem. This
is of course difficult in general, both analytically and
computationally. Even for particular examples like $\mathcal P =
\mathcal E_{\lambda,R}$, in which case $F^-$ and $F^+$ are the minimal
and maximal Pucci operators, respectively, we do not believe it is
possible to give a closed form expression for $\alpha$ or $\Phi$
(although for rotationally invariant operators like these, the problem
can be reduced to an ODE in the radial variable, which greatly reduces
its complexity).

Fortunately, it is more tractable to estimate $\alpha$. This can be
done by
exhibiting explicit $\beta$ for which there exist subsolutions and
supersolutions of the equation
%
\begin{equation}
\label{e.eigbeta} F\bigl(D^2g\bigr) - \tfrac{1}2 x \cdot Dg =
\beta g \qquad\mbox{in } \mathbb{R}^d.
\end{equation}
Let $X$ denote the space
\[
 X:= \bigl\{ g \in C^{2}\bigl(\mathbb{R}^d
\bigr) : \mbox{there exists } a>0 \mbox{ such that } 0\leq g(x) \leq\exp
\bigl(-a|x|^2 \bigr) \bigr\}
\]
and set $X_+ := X \cap\{ g > 0\}$. The following formulas for $\alpha$
were proved in \cite{AT}:
%
\begin{equation}
\label{e.low} \alpha= \sup \bigl\{ \beta> 0 : \exists g\in X_+ \mbox
{ satisfying } F\bigl(D^2g\bigr) - \tfrac{1}2 x \cdot Dg \geq
\beta g \mbox { in } \mathbb{R}^d \bigr\}
\end{equation}
and
%
\begin{equation}\qquad
\label{e.high} \alpha= \inf \bigl\{ \beta> 0 : \exists g\in X_+ \mbox
{ satisfying } F\bigl(D^2g\bigr) - \tfrac{1}2 x \cdot Dg \leq
\beta g \mbox { in } \mathbb{R}^d \bigr\}.
\end{equation}
This allows us to bound $\alpha$ from below (resp., above) by
exhibiting a supersolution (resp., subsolution) of \eqref
{e.eigbeta} with an explicit $\beta$.

In the case that $\mathcal P = \mathcal E_{\lambda,R}$, test functions
were found in \cite{AT} that give the bounds
%
\begin{equation}
\label{e.ATbounds} \frac{d \lambda}{2} \leq\alpha\bigl(F^-\bigr) \leq
\frac{(d-1)\lambda}{2} +\frac{1}2 \leq\frac{(d-1)}{2\lambda} +\frac{1}2
\leq\alpha\bigl(F^+\bigr) \leq\frac
{d}{2\lambda},
\end{equation}
where, for each of these inequalities, equality holds only if $\lambda
=1$. in particular, $\alpha(F^-) < \frac{d}2 < \alpha(F^+)$ if
$\lambda
< 1$. A more general fact along these lines was shown in \cite{AT}, Example~3.12, namely that if $\mathcal P \subseteq\mathcal E_{\lambda
,R}$, then $\alpha(F^-) < \frac{d}2 < \alpha(F^+)$ unless $\mathcal P$
is a singleton set, that is, unless the controller has no actual
control and the martingale is just a simple random walk.

In the next lemma, we use \eqref{e.low} and \eqref{e.high} to prove the
bounds \eqref{e.alphabounds} as promised in the \hyperref[sec1]{Introduction}. To aid
our computation, we remark that for $\mathcal P = \mathcal E_{\lambda
,R}$, the operator $F^-$ can be expressed for each $M\in\mathbb{S}^d$ as
%
\begin{eqnarray}
\label{e.pucciform} F^-(M) & = &- \lambda\cdot ( \mbox{sum of the negative eigenvalues
of } M )
\nonumber
\\[-8pt]
\\[-8pt]
\nonumber
&&{} - ( \mbox{sum of the positive eigenvalues of } M ).
\end{eqnarray}

\begin{lemma}
\label{l.alphcalc}
In the case that $\mathcal P = \mathcal E_{\lambda,R}$ and $F=F^-$, for
every $\delta>0$, there exist $C(d,\delta)>1$ and $c(d,\delta)>0$
such that
\[
 c\lambda^{1/4+\delta} \leq\alpha\leq C \lambda^{1/4-\delta}.
\]
\end{lemma}

\begin{pf*}{Proof of Lemma~\ref{l.alphcalc} \normalfont{(upper bound)}}
Fix $p\in(0,1/2)$ and parameters $a,b>0$ to be selected below, and
consider the test function
%
\begin{equation}
\label{e.varph} \varphi(x):= \exp \bigl( -\tfrac{1}2a |x|^2 -
b \bigl( \lambda+|x|^2 \bigr)^{p/2} \bigr).
\end{equation}
According to \eqref{e.high}, it suffices to show, for appropriate
choices of $a$ and $b$, that $\varphi$ satisfies
%
\begin{equation}
\label{e.testfunn} F^-\bigl(D^2\varphi(x) \bigr) - \tfrac{1}2 x
\cdot D\varphi(x) \leq C \lambda ^{p/2}\varphi\qquad \mbox{in }
\mathbb{R}^d.
\end{equation}
Here and throughout the rest of the argument, $C$ denotes a positive
constant depending only on $(p,d)$ which may vary in each occurrence.

\emph{Step} 1. We compute the first two derivatives of $\varphi$ and
estimate $F^-(D^2\varphi(x))$ from above. We have
\[
D\varphi(x) = -\varphi(x) \bigl( a + bp \bigl( \lambda+|x|^2
\bigr)^{p/2-1} \bigr) x
\]
and
%
\begin{eqnarray}
\label{e.varphess} D^2\varphi(x) & = &\varphi(x) \bigl( a + bp \bigl(
\lambda +|x|^2 \bigr)^{p/2-1} \bigr)^2 x \otimes x
\nonumber\\
&&{} + \varphi(x) \bigl( bp(2-p) \bigl( \lambda+|x|^2
\bigr)^{p/2-2} \bigr) x \otimes x
\\
&&{} -\varphi(x) \bigl( a + bp \bigl( \lambda+|x|^2
\bigr)^{p/2-1} \bigr) I.
\nonumber
\end{eqnarray}
Discarding some of the terms coming from expanding the square on the
first term on the right in the expression for $D^2\varphi(x)$ above, we
find that
\begin{eqnarray*}
D^2\varphi(x) \geq M(x) & :=&\varphi(x) \bigl( a^2 +
bp(2-p) \bigl( \lambda+|x|^2 \bigr)^{p/2-2} \bigr) x \otimes x
\\
&&{} -\varphi(x) \bigl( a + bp \bigl( \lambda+|x|^2
\bigr)^{p/2-1} \bigr) I.
\end{eqnarray*}
Observe that
%
\begin{eqnarray*}
&&\mbox{eigenvalues of $M(x)$} 
\\
&&\qquad= \varphi(x) \cdot\cases{ E(x), &\quad  $
\mbox{with multiplicity } 1$,
\vspace*{2pt}\cr
-a - bp \bigl( \lambda+|x|^2
\bigr)^{p/2-1}, &\quad  $\mbox{with multiplicity } d-1$,} 
\end{eqnarray*}
where
\begin{eqnarray*}
 E(x) & =& a^2|x|^2 - a + bp(2-p) \bigl(
\lambda+|x|^2 \bigr)^{p/2-2}|x|^2 - bp \bigl(
\lambda+|x|^2 \bigr)^{p/2-1}
\\
& = &a^2|x|^2 - a + bp(1-p) \bigl( \lambda+|x|^2
\bigr)^{p/2-1} - bp(2-p) \lambda \bigl( \lambda+|x|^2
\bigr)^{p/2-2}.
\end{eqnarray*}
Using \eqref{e.pucciform}, we get
%
\begin{eqnarray}
\label{e.evalvarph}&& F^-\bigl(D^2 \varphi(x) \bigr)\nonumber \\
&&\qquad\leq F^-\bigl(M(x)
\bigr)
\\
&&\qquad= \varphi(x)\cdot\cases{ \lambda(d-1) \bigl( a + bp \bigl( \lambda+|x|^2
\bigr)^{p/2-1} \bigr) -\lambda E(x), &\quad  $\mbox{if } E(x) \leq0$,
\cr
\lambda(d-1) \bigl( a + bp \bigl( \lambda+|x|^2 \bigr)^{p/2-1}
\bigr) - E(x), &\quad $\mbox{if } E(x) > 0$.}\nonumber
\end{eqnarray}

\emph{Step} 2. We study the set where $E(x)$ is positive. The claim is
that, for $a\geq1$, there exists $C>1$ such that $b \geq C$ implies
%
\begin{eqnarray}
\label{e.Epos} E(x) \geq\frac{1}2 a|x|^2 +
\frac{1}2bp|x|^2 \biggl( 1+\frac
{|x|^2}{\lambda
}
\biggr)^{p/2-1} \geq0
\nonumber
\\[-8pt]
\\[-8pt]
\eqntext{\mbox{for every } |x| > C \sqrt{\lambda}.}
\end{eqnarray}
This follows from the following three facts, each of which is easy to check
\begin{eqnarray*}
&& |x|^2 \geq\frac{2}{a} \quad\Longrightarrow\quad
\frac{1}2 a^2|x|^2 \geq a,
\\
 &&|x|^2 \leq\frac{2}{a} \quad\mbox{and}\quad b \geq C
\quad\Longrightarrow\quad\frac{1}4 bp(1-p) \bigl(\lambda+|x|^2
\bigr)^{p/2-1} \geq a,
\\
&& |x|^2 \geq C \lambda\quad\Longrightarrow\quad \frac{1}4
bp(1-p) \bigl(\lambda+|x|^2 \bigr)^{p/2-1} \geq bp(2-p)
\lambda \bigl(\lambda+|x|^2 \bigr)^{p/2-2}.
\end{eqnarray*}

\emph{Step} 3. We check \eqref{e.testfunn} for $|x| \geq C\sqrt
{\lambda
}$. Note that the estimate \eqref{e.Epos} says precisely that
\[
 E(x) \varphi(x) \geq-\tfrac{1}2 x\cdot D\varphi(x)\qquad
\mbox{for every } |x| \geq C\sqrt\lambda.
\]
This therefore allows us to absorb the gradient term on the left-hand
side of \eqref{e.testfunn}. Using \eqref{e.evalvarph} and taking now
$a:=1$, we find that
\begin{eqnarray*}
 F^-\bigl(D^2\varphi(x)\bigr) -\tfrac{1}2 x
\cdot D\varphi(x) & \leq&\varphi (x)\lambda(d-1) \bigl( a + bp \bigl(
\lambda+|x|^2 \bigr)^{p/2-1} \bigr)
\\
& \leq&\varphi(x)\lambda(d-1) \bigl( a + bp \lambda^{p/2-1} \bigr)
\\
&\leq& C \lambda^{p/2}\varphi(x).
\end{eqnarray*}

\emph{Step} 4. We check \eqref{e.testfunn} in the set $|x| \leq
C\sqrt
{\lambda}$. Here, we get the estimate \eqref{e.testfunn} differently,
since the gradient term does not hurt us:
\[
 -\tfrac{1}2 x\cdot D\varphi(x) \leq\varphi(x)-\varphi(x)
\bigl( a + bp \bigl( \lambda+|x|^2 \bigr)^{p/2-1} \bigr)
|x|^2 \leq C \lambda ^{p/2} \varphi(x).
\]
A similar estimate yields, for $|x| \leq C\sqrt{\lambda}$,
\[
 -\lambda E(x)\varphi(x) \leq\lambda \bigl( a + bp \bigl(
\lambda+|x|^2 \bigr)^{p/2-1} \bigr) \varphi(x) \leq C
\lambda^{p/2} \varphi(x).
\]
Returning to \eqref{e.evalvarph}, we obtain that, for $|x| \leq C\sqrt
{\lambda}$,
\[
 F^-(D^2\varphi(x) - \tfrac{1}2 x \cdot D
\varphi(x) \leq C \lambda^{p/2} \varphi(x).
\]
This completes the proof of \eqref{e.testfunn}.
\end{pf*}

\begin{pf*}{{Proof of Lemma~\ref{l.alphcalc} \normalfont{(lower bound)}}}
We use the same test function $\varphi$ from the previous argument,
except here we take $p\in(1/2,1]$. The goal is to show that, for
appropriate choices of the parameters $a$ and $b$ (here we take them to
be very small, depending on $p$), we have the reverse of \eqref{e.testfunn}:
%
\begin{equation}
\label{e.testfunn2} F^-\bigl(D^2\varphi(x) \bigr) - \tfrac{1}2 x
\cdot D\varphi(x) \geq c \lambda ^{p/2}\varphi\qquad\mbox{in }
\mathbb{R}^d.
\end{equation}
The analysis and computations involved are quite similar to
those of the previous argument, and so we leave the details to the reader.
\end{pf*}

\section*{Acknowledgements}
We thank Yuval Peres and Charlie Smart for helpful discussions, as well
as for providing us with their analysis of
the motivating example in Corollary~\ref{cor-fixednorm}.
We thank the referees and Associate Editor for useful comments that helped
in improving the presentation of the results.

%
%





\printaddresses
\end{document}